
\documentclass{amsart}
\usepackage{amsfonts}

\usepackage{amscd}
\usepackage{thmdefs}



\typeout{TCILATEX Macros for Scientific Word 2.5 <22 Dec 95>.}
\typeout{NOTICE:  This macro file is NOT proprietary and may be 
freely copied and distributed.}
\makeatletter
%
\newcount\@hour\newcount\@minute\chardef\@x10\chardef\@xv60
\def\tcitime{
\def\@time{%
  \@minute\time\@hour\@minute\divide\@hour\@xv
  \ifnum\@hour<\@x 0\fi\the\@hour:%
  \multiply\@hour\@xv\advance\@minute-\@hour
  \ifnum\@minute<\@x 0\fi\the\@minute
  }}%

\@ifundefined{hyperref}{}{}

\@ifundefined{qExtProgCall}{\def\qExtProgCall#1#2#3#4#5#6{\relax}}{}
%
%
%
%
\def\QCTOpt[#1]#2{%
  \def\QCTOptB{#1}
  \def\QCTOptA{#2}
}
\def\QCTNOpt#1{%
  \def\QCTOptA{#1}
  \let\QCTOptB\empty
}
\def\Qct{%
  \@ifnextchar[{%
    \QCTOpt}{\QCTNOpt}
}
\def\QCBOpt[#1]#2{%
  \def\QCBOptB{#1}
  \def\QCBOptA{#2}
}
\def\QCBNOpt#1{%
  \def\QCBOptA{#1}
  \let\QCBOptB\empty
}
\def\Qcb{%
  \@ifnextchar[{%
    \QCBOpt}{\QCBNOpt}
}
\def\PrepCapArgs{%
  \ifx\QCBOptA\empty
    \ifx\QCTOptA\empty
      {}%
    \else
      \ifx\QCTOptB\empty
        {\QCTOptA}%
      \else
        [\QCTOptB]{\QCTOptA}%
      \fi
    \fi
  \else
    \ifx\QCBOptA\empty
      {}%
    \else
      \ifx\QCBOptB\empty
        {\QCBOptA}%
      \else
        [\QCBOptB]{\QCBOptA}%
      \fi
    \fi
  \fi
}
\newcount\GRAPHICSTYPE
\GRAPHICSTYPE=\z@
\def\GRAPHICSPS#1{%
 \ifcase\GRAPHICSTYPE
   \special{ps: #1}%
 \or
   \special{language "PS", include "#1"}%
 \fi
}%
%
%
%
\def\graffile#1#2#3#4{%
    \leavevmode
    \raise -#4 \BOXTHEFRAME{%
        \hbox to #2{\raise #3\hbox to #2{\null #1\hfil}}}%
}%
%
\def\draftbox#1#2#3#4{%
 \leavevmode\raise -#4 \hbox{%
  \frame{\rlap{\protect\tiny #1}\hbox to #2%
   {\vrule height#3 width\z@ depth\z@\hfil}%
  }%
 }%
}%
\newcount\draft
\draft=\z@

\newif\ifwasdraft
\wasdraftfalse

\def\GRAPHIC#1#2#3#4#5{%
 \ifnum\draft=\@ne\draftbox{#2}{#3}{#4}{#5}%
  \else\graffile{#1}{#3}{#4}{#5}%
  \fi
 }%
\def\addtoLaTeXparams#1{%
    \edef\LaTeXparams{\LaTeXparams #1}}%
%

\newif\ifBoxFrame \BoxFramefalse
\newif\ifOverFrame \OverFramefalse
\newif\ifUnderFrame \UnderFramefalse

\def\BOXTHEFRAME#1{%
   \hbox{%
      \ifBoxFrame
         \frame{#1}%
      \else
         {#1}%
      \fi
   }%
}

\def\doFRAMEparams#1{\BoxFramefalse\OverFramefalse\UnderFramefalse\readFRAMEparams#1\end}%
\def\readFRAMEparams#1{%
 \ifx#1\end%
  \let\next=\relax
  \else
  \ifx#1i\dispkind=\z@\fi
  \ifx#1d\dispkind=\@ne\fi
  \ifx#1f\dispkind=\tw@\fi
  \ifx#1t\addtoLaTeXparams{t}\fi
  \ifx#1b\addtoLaTeXparams{b}\fi
  \ifx#1p\addtoLaTeXparams{p}\fi
  \ifx#1h\addtoLaTeXparams{h}\fi
  \ifx#1X\BoxFrametrue\fi
  \ifx#1O\OverFrametrue\fi
  \ifx#1U\UnderFrametrue\fi
  \ifx#1w
    \ifnum\draft=1\wasdrafttrue\else\wasdraftfalse\fi
    \draft=\@ne
  \fi
  \let\next=\readFRAMEparams
  \fi
 \next
 }%
%

\def\IFRAME#1#2#3#4#5#6{%
      \bgroup
      \let\QCTOptA\empty
      \let\QCTOptB\empty
      \let\QCBOptA\empty
      \let\QCBOptB\empty
      #6%
      \parindent=0pt%
      \leftskip=0pt
      \rightskip=0pt
      \setbox0 = \hbox{\QCBOptA}%
      \@tempdima = #1\relax
      \ifOverFrame
          \typeout{This is not implemented yet}%
          \show\HELP
      \else
         \ifdim\wd0>\@tempdima
            \advance\@tempdima by \@tempdima
            \ifdim\wd0 >\@tempdima
               \textwidth=\@tempdima
               \setbox1 =\vbox{%
                  \noindent\hbox to \@tempdima{\hfill\GRAPHIC{#5}{#4}{#1}{#2}{#3}\hfill}\\%
                  \noindent\hbox to \@tempdima{\parbox[b]{\@tempdima}{\QCBOptA}}%
               }%
               \wd1=\@tempdima
            \else
               \textwidth=\wd0
               \setbox1 =\vbox{%
                 \noindent\hbox to \wd0{\hfill\GRAPHIC{#5}{#4}{#1}{#2}{#3}\hfill}\\%
                 \noindent\hbox{\QCBOptA}%
               }%
               \wd1=\wd0
            \fi
         \else
            \ifdim\wd0>0pt
              \hsize=\@tempdima
              \setbox1 =\vbox{%
                \unskip\GRAPHIC{#5}{#4}{#1}{#2}{0pt}%
                \break
                \unskip\hbox to \@tempdima{\hfill \QCBOptA\hfill}%
              }%
              \wd1=\@tempdima
           \else
              \hsize=\@tempdima
              \setbox1 =\vbox{%
                \unskip\GRAPHIC{#5}{#4}{#1}{#2}{0pt}%
              }%
              \wd1=\@tempdima
           \fi
         \fi
         \@tempdimb=\ht1
         \advance\@tempdimb by \dp1
         \advance\@tempdimb by -#2%
         \advance\@tempdimb by #3%
         \leavevmode
         \raise -\@tempdimb \hbox{\box1}%
      \fi
      \egroup%
}%
%
\def\DFRAME#1#2#3#4#5{%
 \begin{center}
     \let\QCTOptA\empty
     \let\QCTOptB\empty
     \let\QCBOptA\empty
     \let\QCBOptB\empty
     \ifOverFrame 
        #5\QCTOptA\par
     \fi
     \GRAPHIC{#4}{#3}{#1}{#2}{\z@}
     \ifUnderFrame 
        \nobreak\par #5\QCBOptA
     \fi
 \end{center}%
 }%
%
\def\FFRAME#1#2#3#4#5#6#7{%
 \begin{figure}[#1]%
  \let\QCTOptA\empty
  \let\QCTOptB\empty
  \let\QCBOptA\empty
  \let\QCBOptB\empty
  \ifOverFrame
    #4
    \ifx\QCTOptA\empty
    \else
      \ifx\QCTOptB\empty
        \caption{\QCTOptA}%
      \else
        \caption[\QCTOptB]{\QCTOptA}%
      \fi
    \fi
    \ifUnderFrame\else
      \label{#5}%
    \fi
  \else
    \UnderFrametrue%
  \fi
  \begin{center}\GRAPHIC{#7}{#6}{#2}{#3}{\z@}\end{center}%
  \ifUnderFrame
    #4
    \ifx\QCBOptA\empty
      \caption{}%
    \else
      \ifx\QCBOptB\empty
        \caption{\QCBOptA}%
      \else
        \caption[\QCBOptB]{\QCBOptA}%
      \fi
    \fi
    \label{#5}%
  \fi
  \end{figure}%
 }%
%
%
%
%
%
\newcount\dispkind%

\def\makeactives{
  \catcode`\"=\active
  \catcode`\;=\active
  \catcode`\:=\active
  \catcode`\'=\active
  \catcode`\~=\active
}
\bgroup
   \makeactives
   \gdef\activesoff{%
      \def"{\string"}
      \def;{\string;}
      \def:{\string:}
      \def'{\string'}
      \def~{\string~}
    }
\egroup

\def\FRAME#1#2#3#4#5#6#7#8{%
 \bgroup
 \@ifundefined{bbl@deactivate}{}{\activesoff}
 \ifnum\draft=\@ne
   \wasdrafttrue
 \else
   \wasdraftfalse%
 \fi
 \def\LaTeXparams{}%
 \dispkind=\z@
 \def\LaTeXparams{}%
 \doFRAMEparams{#1}%
 \ifnum\dispkind=\z@\IFRAME{#2}{#3}{#4}{#7}{#8}{#5}\else
  \ifnum\dispkind=\@ne\DFRAME{#2}{#3}{#7}{#8}{#5}\else
   \ifnum\dispkind=\tw@
    \edef\@tempa{\noexpand\FFRAME{\LaTeXparams}}%
    \@tempa{#2}{#3}{#5}{#6}{#7}{#8}%
    \fi
   \fi
  \fi
  \ifwasdraft\draft=1\else\draft=0\fi{}%
  \egroup
 }%
%

\def\TEXUX#1{"texux"}

%
%
%
%
%
%
%
%
\def\limfunc#1{\mathop{\rm #1}}%

%
\long\def\QQQ#1#2{%
     \long\expandafter\def\csname#1\endcsname{#2}}%
\@ifundefined{QTP}{\def\QTP#1{}}{}
\@ifundefined{QEXCLUDE}{\def\QEXCLUDE#1{}}{}
\@ifundefined{Qlb}{}{}
\@ifundefined{Qlt}{}{}
\long\def\QQA#1#2{}%
\def\QTR#1#2{{\csname#1\endcsname #2}}
\def\EXPAND#1[#2]#3{}%
\def\NOEXPAND#1[#2]#3{}%
\def\LaTeXparent#1{}%
\def\ChildStyles#1{}%
\def\ChildDefaults#1{}%
\def\QTagDef#1#2#3{}%
%
\@ifundefined{StyleEditBeginDoc}{}{}
%
\def\QQfnmark#1{\footnotemark}

%
\def\makeatletter\input gnuindex.sty\makeatother\makeindex{\makeatletter\input gnuindex.sty\makeatother\makeindex}%
\@ifundefined{INDEX}{\def\INDEX#1#2{}{}}{}%
\@ifundefined{SUBINDEX}{\def\SUBINDEX#1#2#3{}{}{}}{}%
\@ifundefined{initial}%
   {\def\initial#1{\bigbreak{\raggedright\large\bf #1}\kern 2\p@\penalty3000}}%
   {}%
\@ifundefined{entry}{}{}%
\@ifundefined{primary}{}{}%
\@ifundefined{secondary}{}{}%
\@ifundefined{ZZZ}{}{\makeatletter\input gnuindex.sty\makeatother\makeindex\makeatletter}%
%
\@ifundefined{abstract}{%
 \def\abstract{%
  \if@twocolumn
   \section*{Abstract (Not appropriate in this style!)}%
   \else \small 
   \begin{center}{\bf Abstract\vspace{-.5em}\vspace{\z@}}\end{center}%
   \quotation 
   \fi
  }%
 }{%
 }%
\@ifundefined{endabstract}{\def\endabstract
  {\if@twocolumn\else\endquotation\fi}}{}%
\@ifundefined{maketitle}{\def\maketitle#1{}}{}%
\@ifundefined{affiliation}{\def\affiliation#1{}}{}%
\@ifundefined{proof}{}{}%
\@ifundefined{endproof}{}{}%
\@ifundefined{newfield}{\def\newfield#1#2{}}{}%
\@ifundefined{chapter}{\def\chapter#1{\par(Chapter head:)#1\par }%
 \newcount\c@chapter}{}%
\@ifundefined{part}{\def\part#1{\par(Part head:)#1\par }}{}%
\@ifundefined{section}{\def\section#1{\par(Section head:)#1\par }}{}%
\@ifundefined{subsection}{\def\subsection#1%
 {\par(Subsection head:)#1\par }}{}%
\@ifundefined{subsubsection}{\def\subsubsection#1%
 {\par(Subsubsection head:)#1\par }}{}%
\@ifundefined{paragraph}{\def\paragraph#1%
 {\par(Subsubsubsection head:)#1\par }}{}%
\@ifundefined{subparagraph}{\def\subparagraph#1%
 {\par(Subsubsubsubsection head:)#1\par }}{}%
\@ifundefined{therefore}{}{}%
\@ifundefined{backepsilon}{}{}%
\@ifundefined{yen}{}{}%
\@ifundefined{registered}{%
   \def\registered{\relax\ifmmode{}\r@gistered
                    \else$\m@th\r@gistered$\fi}%
 \def\r@gistered{^{\ooalign
  {\hfil\raise.07ex\hbox{$\scriptstyle\rm\text{R}$}\hfil\crcr
  \mathhexbox20D}}}}{}%
\@ifundefined{Eth}{}{}%
\@ifundefined{eth}{}{}%
\@ifundefined{Thorn}{}{}%
\@ifundefined{thorn}{}{}%
%
\@ifundefined{degree}{}{}%
%
\newdimen\theight
\def\Column{%
 \vadjust{\setbox\z@=\hbox{\scriptsize\quad\quad tcol}%
  \theight=\ht\z@\advance\theight by \dp\z@\advance\theight by \lineskip
  \kern -\theight \vbox to \theight{%
   \rightline{\rlap{\box\z@}}%
   \vss
   }%
  }%
 }%
\def\qed{%
 \ifhmode\unskip\nobreak\fi\ifmmode\ifinner\else\hskip5\p@\fi\fi
 \hbox{\hskip5\p@\vrule width4\p@ height6\p@ depth1.5\p@\hskip\p@}%
 }%
\def\miss{\hbox{\vrule height2\p@ width 2\p@ depth\z@}}%
%
%
\def\tcol#1{{\baselineskip=6\p@ \vcenter{#1}} \Column}  %
%
%
%
%
%

\def\newfmtname{LaTeX2e}
\def\chkcompat{%
   \if@compatibility
   \else
     \usepackage{latexsym}
   \fi
}

\ifx\fmtname\newfmtname
  \DeclareOldFontCommand{\rm}{\normalfont\rmfamily}{\mathrm}
  \DeclareOldFontCommand{\sf}{\normalfont\sffamily}{\mathsf}
  \DeclareOldFontCommand{\tt}{\normalfont\ttfamily}{\mathtt}
  \DeclareOldFontCommand{\bf}{\normalfont\bfseries}{\mathbf}
  \DeclareOldFontCommand{\it}{\normalfont\itshape}{\mathit}
  \DeclareOldFontCommand{\sl}{\normalfont\slshape}{\@nomath\sl}
  \DeclareOldFontCommand{\sc}{\normalfont\scshape}{\@nomath\sc}
  \chkcompat
\fi

%

\def\alpha{{\Greekmath 010B}}%
\def\beta{{\Greekmath 010C}}%
\def\gamma{{\Greekmath 010D}}%
\def\delta{{\Greekmath 010E}}%
\def\epsilon{{\Greekmath 010F}}%
\def\zeta{{\Greekmath 0110}}%
\def\eta{{\Greekmath 0111}}%
\def\theta{{\Greekmath 0112}}%
\def\iota{{\Greekmath 0113}}%
\def\kappa{{\Greekmath 0114}}%
\def\lambda{{\Greekmath 0115}}%
\def\mu{{\Greekmath 0116}}%
\def\nu{{\Greekmath 0117}}%
\def\xi{{\Greekmath 0118}}%
\def\pi{{\Greekmath 0119}}%
\def\rho{{\Greekmath 011A}}%
\def\sigma{{\Greekmath 011B}}%
\def\tau{{\Greekmath 011C}}%
\def\upsilon{{\Greekmath 011D}}%
\def\phi{{\Greekmath 011E}}%
\def\chi{{\Greekmath 011F}}%
\def\psi{{\Greekmath 0120}}%
\def\omega{{\Greekmath 0121}}%
\def\varepsilon{{\Greekmath 0122}}%
\def\vartheta{{\Greekmath 0123}}%
\def\varpi{{\Greekmath 0124}}%
\def\varrho{{\Greekmath 0125}}%
\def\varsigma{{\Greekmath 0126}}%
\def\varphi{{\Greekmath 0127}}%

\def\nabla{{\Greekmath 0272}}
\def\FindBoldGroup{%
   {\setbox0=\hbox{$\mathbf{x\global\edef\theboldgroup{\the\mathgroup}}$}}%
}

\def\Greekmath#1#2#3#4{%
    \if@compatibility
        \ifnum\mathgroup=\symbold
           \mathchoice{\mbox{\boldmath$\displaystyle\mathchar"#1#2#3#4$}}%
                      {\mbox{\boldmath$\textstyle\mathchar"#1#2#3#4$}}%
                      {\mbox{\boldmath$\scriptstyle\mathchar"#1#2#3#4$}}%
                      {\mbox{\boldmath$\scriptscriptstyle\mathchar"#1#2#3#4$}}%
        \else
           \mathchar"#1#2#3#4%
        \fi 
    \else 
        \FindBoldGroup
        \ifnum\mathgroup=\theboldgroup 
           \mathchoice{\mbox{\boldmath$\displaystyle\mathchar"#1#2#3#4$}}%
                      {\mbox{\boldmath$\textstyle\mathchar"#1#2#3#4$}}%
                      {\mbox{\boldmath$\scriptstyle\mathchar"#1#2#3#4$}}%
                      {\mbox{\boldmath$\scriptscriptstyle\mathchar"#1#2#3#4$}}%
        \else
           \mathchar"#1#2#3#4%
        \fi     	    
	  \fi}

\newif\ifGreekBold  \GreekBoldfalse
\let\SAVEPBF=\pbf
\def\pbf{\GreekBoldtrue\SAVEPBF}%

\@ifundefined{theorem}{\newtheorem{theorem}{Theorem}}{}
\@ifundefined{lemma}{\newtheorem{lemma}[theorem]{Lemma}}{}
\@ifundefined{corollary}{\newtheorem{corollary}[theorem]{Corollary}}{}
\@ifundefined{conjecture}{}{}
\@ifundefined{proposition}{}{}
\@ifundefined{axiom}{}{}
\@ifundefined{remark}{\newtheorem{remark}{Remark}}{}
\@ifundefined{example}{\newtheorem{example}{Example}}{}
\@ifundefined{exercise}{}{}
\@ifundefined{definition}{}{}

\@ifundefined{mathletters}{%
  \newcounter{equationnumber}  
  \def\mathletters{%
     \addtocounter{equation}{1}
     \edef\@currentlabel{\theequation}%
     \setcounter{equationnumber}{\c@equation}
     \setcounter{equation}{0}%
     \edef\theequation{\@currentlabel\noexpand\alph{equation}}%
  }
  
}{}

\@ifundefined{BibTeX}{%
    \def\BibTeX{{\rm B\kern-.05em{\sc i\kern-.025em b}\kern-.08em
                 T\kern-.1667em\lower.7ex\hbox{E}\kern-.125emX}}}{}%
\@ifundefined{AmS}%
    {\def\AmS{{\protect\usefont{OMS}{cmsy}{m}{n}%
                A\kern-.1667em\lower.5ex\hbox{M}\kern-.125emS}}}{}%
\@ifundefined{AmSTeX}{}{}%
%

%
%
\ifx\ds@amstex\relax
   \message{amstex already loaded}\makeatother 
\else
   \@ifpackageloaded{amstex}%
      {\message{amstex already loaded}\makeatother }
      {}
   \@ifpackageloaded{amsgen}%
      {\message{amsgen already loaded}\makeatother }
      {}
\fi
%
%
%
%
\let\DOTSI\relax
\def\eat@#1{}%
\def\RIfM@{\relax\ifmmode}%
\def\FN@{\futurelet\next}%
\newcount\intno@
\def\iint{\DOTSI\intno@\tw@\FN@\ints@}%
\def\iiint{\DOTSI\intno@\thr@@\FN@\ints@}%
\def\iiiint{\DOTSI\intno@4 \FN@\ints@}%
\def\idotsint{\DOTSI\intno@\z@\FN@\ints@}%
\def\ints@{\findlimits@\ints@@}%
\newif\iflimtoken@
\newif\iflimits@
\def\findlimits@{\limtoken@true\ifx\next\limits\limits@true
 \else\ifx\next\nolimits\limits@false\else
 \limtoken@false\ifx\ilimits@\nolimits\limits@false\else
 \ifinner\limits@false\else\limits@true\fi\fi\fi\fi}%
\def\multint@{\int\ifnum\intno@=\z@\intdots@                          
 \else\intkern@\fi                                                    
 \ifnum\intno@>\tw@\int\intkern@\fi                                   
 \ifnum\intno@>\thr@@\int\intkern@\fi                                 
 \int}
\def\multintlimits@{\intop\ifnum\intno@=\z@\intdots@\else\intkern@\fi
 \ifnum\intno@>\tw@\intop\intkern@\fi
 \ifnum\intno@>\thr@@\intop\intkern@\fi\intop}%
\def\intic@{%
    \mathchoice{\hskip.5em}{\hskip.4em}{\hskip.4em}{\hskip.4em}}%
\def\negintic@{\mathchoice
 {\hskip-.5em}{\hskip-.4em}{\hskip-.4em}{\hskip-.4em}}%
\def\ints@@{\iflimtoken@                                              
 \def\ints@@@{\iflimits@\negintic@
   \mathop{\intic@\multintlimits@}\limits                             
  \else\multint@\nolimits\fi                                          
  \eat@}
 \else                                                                
 \def\ints@@@{\iflimits@\negintic@
  \mathop{\intic@\multintlimits@}\limits\else
  \multint@\nolimits\fi}\fi\ints@@@}%
\def\intkern@{\mathchoice{\!\!\!}{\!\!}{\!\!}{\!\!}}%
\def\plaincdots@{\mathinner{\cdotp\cdotp\cdotp}}%
\def\intdots@{\mathchoice{\plaincdots@}%
 {{\cdotp}\mkern1.5mu{\cdotp}\mkern1.5mu{\cdotp}}%
 {{\cdotp}\mkern1mu{\cdotp}\mkern1mu{\cdotp}}%
 {{\cdotp}\mkern1mu{\cdotp}\mkern1mu{\cdotp}}}%
%
%
%
\def\RIfM@{\relax\protect\ifmmode}
\def\text{\RIfM@\expandafter\text@\else\expandafter\mbox\fi}
\let\nfss@text\text
\def\text@#1{\mathchoice
   {\textdef@\displaystyle\f@size{#1}}%
   {\textdef@\textstyle\tf@size{\firstchoice@false #1}}%
   {\textdef@\textstyle\sf@size{\firstchoice@false #1}}%
   {\textdef@\textstyle \ssf@size{\firstchoice@false #1}}%
   \glb@settings}

\def\textdef@#1#2#3{\hbox{{%
                    \everymath{#1}%
                    \let\f@size#2\selectfont
                    #3}}}
\newif\iffirstchoice@
\firstchoice@true
%
%
%
%
%
\def\Let@{\relax\iffalse{\fi\let\\=\cr\iffalse}\fi}%
\def\vspace@{\def\vspace##1{\crcr\noalign{\vskip##1\relax}}}%
\def\multilimits@{\bgroup\vspace@\Let@
 \baselineskip\fontdimen10 \scriptfont\tw@
 \advance\baselineskip\fontdimen12 \scriptfont\tw@
 \lineskip\thr@@\fontdimen8 \scriptfont\thr@@
 \lineskiplimit\lineskip
 \vbox\bgroup\ialign\bgroup\hfil$\m@th\scriptstyle{##}$\hfil\crcr}%
\def\Sb{_\multilimits@}%
\def\endSb{\crcr\egroup\egroup\egroup}%
\def\Sp{^\multilimits@}%

%
%
%
\newdimen\ex@
\ex@.2326ex
\def\rightarrowfill@#1{$#1\m@th\mathord-\mkern-6mu\cleaders
 \hbox{$#1\mkern-2mu\mathord-\mkern-2mu$}\hfill
 \mkern-6mu\mathord\rightarrow$}%
\def\leftarrowfill@#1{$#1\m@th\mathord\leftarrow\mkern-6mu\cleaders
 \hbox{$#1\mkern-2mu\mathord-\mkern-2mu$}\hfill\mkern-6mu\mathord-$}%
\def\leftrightarrowfill@#1{$#1\m@th\mathord\leftarrow
\mkern-6mu\cleaders
 \hbox{$#1\mkern-2mu\mathord-\mkern-2mu$}\hfill
 \mkern-6mu\mathord\rightarrow$}%
\def\overrightarrow{\mathpalette\overrightarrow@}%
\def\overrightarrow@#1#2{\vbox{\ialign{##\crcr\rightarrowfill@#1\crcr
 \noalign{\kern-\ex@\nointerlineskip}$\m@th\hfil#1#2\hfil$\crcr}}}%

\def\overleftarrow{\mathpalette\overleftarrow@}%
\def\overleftarrow@#1#2{\vbox{\ialign{##\crcr\leftarrowfill@#1\crcr
 \noalign{\kern-\ex@\nointerlineskip}$\m@th\hfil#1#2\hfil$\crcr}}}%
\def\overleftrightarrow{\mathpalette\overleftrightarrow@}%
\def\overleftrightarrow@#1#2{\vbox{\ialign{##\crcr
   \leftrightarrowfill@#1\crcr
 \noalign{\kern-\ex@\nointerlineskip}$\m@th\hfil#1#2\hfil$\crcr}}}%
\def\underrightarrow{\mathpalette\underrightarrow@}%
\def\underrightarrow@#1#2{\vtop{\ialign{##\crcr$\m@th\hfil#1#2\hfil
  $\crcr\noalign{\nointerlineskip}\rightarrowfill@#1\crcr}}}%

\def\underleftarrow{\mathpalette\underleftarrow@}%
\def\underleftarrow@#1#2{\vtop{\ialign{##\crcr$\m@th\hfil#1#2\hfil
  $\crcr\noalign{\nointerlineskip}\leftarrowfill@#1\crcr}}}%
\def\underleftrightarrow{\mathpalette\underleftrightarrow@}%
\def\underleftrightarrow@#1#2{\vtop{\ialign{##\crcr$\m@th
  \hfil#1#2\hfil$\crcr
 \noalign{\nointerlineskip}\leftrightarrowfill@#1\crcr}}}%


\def\qopnamewl@#1{\mathop{\operator@font#1}\nlimits@}
\let\nlimits@\displaylimits
\def\setboxz@h{\setbox\z@\hbox}

\def\varlim@#1#2{\mathop{\vtop{\ialign{##\crcr
 \hfil$#1\m@th\operator@font lim$\hfil\crcr
 \noalign{\nointerlineskip}#2#1\crcr
 \noalign{\nointerlineskip\kern-\ex@}\crcr}}}}

 \def\rightarrowfill@#1{\m@th\setboxz@h{$#1-$}\ht\z@\z@
  $#1\copy\z@\mkern-6mu\cleaders
  \hbox{$#1\mkern-2mu\box\z@\mkern-2mu$}\hfill
  \mkern-6mu\mathord\rightarrow$}
\def\leftarrowfill@#1{\m@th\setboxz@h{$#1-$}\ht\z@\z@
  $#1\mathord\leftarrow\mkern-6mu\cleaders
  \hbox{$#1\mkern-2mu\copy\z@\mkern-2mu$}\hfill
  \mkern-6mu\box\z@$}

\def\projlim{\qopnamewl@{proj\,lim}}
\def\injlim{\qopnamewl@{inj\,lim}}
\def\varinjlim{\mathpalette\varlim@\rightarrowfill@}
\def\varprojlim{\mathpalette\varlim@\leftarrowfill@}
\def\varliminf{\mathpalette\varliminf@{}}
\def\varliminf@#1{\mathop{\underline{\vrule\@depth.2\ex@\@width\z@
   \hbox{$#1\m@th\operator@font lim$}}}}
\def\varlimsup{\mathpalette\varlimsup@{}}
\def\varlimsup@#1{\mathop{\overline
  {\hbox{$#1\m@th\operator@font lim$}}}}

%
%
%
%
%
%
%
%
%
%
%
%
%
%
%
%
%
%
%
%
%
%
%

%
%
%
%
%
%
%
%
%
%
%
%
%
%
%
%
%
%
%
%
%
%

%
%
%
%
%
%
%
%
%
%
%
%
%
%
%
%
%
%
%
%
%
%
%
%
\begingroup \catcode `|=0 \catcode `[= 1
\catcode`]=2 \catcode `\{=12 \catcode `\}=12
\catcode`\\=12 
|gdef|@alignverbatim#1\end{align}[#1|end[align]]
|gdef|@salignverbatim#1\end{align*}[#1|end[align*]]

|gdef|@alignatverbatim#1\end{alignat}[#1|end[alignat]]
|gdef|@salignatverbatim#1\end{alignat*}[#1|end[alignat*]]

|gdef|@xalignatverbatim#1\end{xalignat}[#1|end[xalignat]]
|gdef|@sxalignatverbatim#1\end{xalignat*}[#1|end[xalignat*]]

|gdef|@gatherverbatim#1\end{gather}[#1|end[gather]]
|gdef|@sgatherverbatim#1\end{gather*}[#1|end[gather*]]

|gdef|@gatherverbatim#1\end{gather}[#1|end[gather]]
|gdef|@sgatherverbatim#1\end{gather*}[#1|end[gather*]]

|gdef|@multilineverbatim#1\end{multiline}[#1|end[multiline]]
|gdef|@smultilineverbatim#1\end{multiline*}[#1|end[multiline*]]

|gdef|@arraxverbatim#1\end{arrax}[#1|end[arrax]]
|gdef|@sarraxverbatim#1\end{arrax*}[#1|end[arrax*]]

|gdef|@tabulaxverbatim#1\end{tabulax}[#1|end[tabulax]]
|gdef|@stabulaxverbatim#1\end{tabulax*}[#1|end[tabulax*]]

|endgroup

\def\align{\@verbatim \frenchspacing\@vobeyspaces \@alignverbatim
You are using the "align" environment in a style in which it is not defined.}

\@namedef{align*}{\@verbatim\@salignverbatim
You are using the "align*" environment in a style in which it is not defined.}
\expandafter\let\csname endalign*\endcsname =\endtrivlist

\def\alignat{\@verbatim \frenchspacing\@vobeyspaces \@alignatverbatim
You are using the "alignat" environment in a style in which it is not defined.}

\@namedef{alignat*}{\@verbatim\@salignatverbatim
You are using the "alignat*" environment in a style in which it is not defined.}
\expandafter\let\csname endalignat*\endcsname =\endtrivlist

\def\xalignat{\@verbatim \frenchspacing\@vobeyspaces \@xalignatverbatim
You are using the "xalignat" environment in a style in which it is not defined.}

\@namedef{xalignat*}{\@verbatim\@sxalignatverbatim
You are using the "xalignat*" environment in a style in which it is not defined.}
\expandafter\let\csname endxalignat*\endcsname =\endtrivlist

\def\gather{\@verbatim \frenchspacing\@vobeyspaces \@gatherverbatim
You are using the "gather" environment in a style in which it is not defined.}

\@namedef{gather*}{\@verbatim\@sgatherverbatim
You are using the "gather*" environment in a style in which it is not defined.}
\expandafter\let\csname endgather*\endcsname =\endtrivlist

\def\multiline{\@verbatim \frenchspacing\@vobeyspaces \@multilineverbatim
You are using the "multiline" environment in a style in which it is not defined.}

\@namedef{multiline*}{\@verbatim\@smultilineverbatim
You are using the "multiline*" environment in a style in which it is not defined.}
\expandafter\let\csname endmultiline*\endcsname =\endtrivlist

\def\arrax{\@verbatim \frenchspacing\@vobeyspaces \@arraxverbatim
You are using a type of "array" construct that is only allowed in AmS-LaTeX.}

\def\tabulax{\@verbatim \frenchspacing\@vobeyspaces \@tabulaxverbatim
You are using a type of "tabular" construct that is only allowed in AmS-LaTeX.}

\@namedef{arrax*}{\@verbatim\@sarraxverbatim
You are using a type of "array*" construct that is only allowed in AmS-LaTeX.}
\expandafter\let\csname endarrax*\endcsname =\endtrivlist

\@namedef{tabulax*}{\@verbatim\@stabulaxverbatim
You are using a type of "tabular*" construct that is only allowed in AmS-LaTeX.}
\expandafter\let\csname endtabulax*\endcsname =\endtrivlist


\def\@@eqncr{\let\@tempa\relax
    \ifcase\@eqcnt \def\@tempa{& & &}\or \def\@tempa{& &}%
      \else \def\@tempa{&}\fi
     \@tempa
     \if@eqnsw
        \iftag@
           \@taggnum
        \else
           \@eqnnum\stepcounter{equation}%
        \fi
     \fi
     \global\tag@false
     \global\@eqnswtrue
     \global\@eqcnt\z@\cr}

 \def\endequation{%
     \ifmmode\ifinner 
      \iftag@
        \addtocounter{equation}{-1} 
        $\hfil
           \displaywidth\linewidth\@taggnum\egroup \endtrivlist
        \global\tag@false
        \global\@ignoretrue   
      \else
        $\hfil
           \displaywidth\linewidth\@eqnnum\egroup \endtrivlist
        \global\tag@false
        \global\@ignoretrue 
      \fi
     \else   
      \iftag@
        \addtocounter{equation}{-1} 
        \eqno \hbox{\@taggnum}
        \global\tag@false%
        $$\global\@ignoretrue
      \else
        \eqno \hbox{\@eqnnum}
        $$\global\@ignoretrue
      \fi
     \fi\fi
 } 

 \newif\iftag@ \tag@false
 
 \def\tag{\@ifnextchar*{\@tagstar}{\@tag}}
 \def\@tag#1{%
     \global\tag@true
     \global\def\@taggnum{(#1)}}
 \def\@tagstar*#1{%
     \global\tag@true
     \global\def\@taggnum{#1}%
}


\makeatother

\theoremstyle{definition}
\theoremstyle{remark}
\numberwithin{equation}{section}

\begin{document}
\title[Generalized means]{Some measure-theoretic properties of generalized means}
\author{Irina Navrotskaya}
\address{Department of Mathematics, University of Pittsburgh, Pittsburgh, PA 15260}
\email{IRN6@pitt.edu.}
\author{Patrick J. Rabier}
\address{Department of Mathematics, University of Pittsburgh, Pittsburgh, PA 15260}
\email{rabier@imap.pitt.edu}
\subjclass{28A20, 28A35, 82B21, 94A17}
\keywords{Generalized mean, kernel, measurability, convergence, U-statistics.}
\maketitle

\begin{abstract}
If $\Lambda $ is a measure space, $u:\Lambda ^{m}\rightarrow \Bbb{R}$ is a
given function and $N\geq m,$ the function $U(x_{1},...,x_{N})=\left( 
\begin{array}{l}
N \\ 
m
\end{array}
\right) ^{-1}\sum_{1\leq i_{1}<\cdots <i_{m}\leq
N}u(x_{i_{1}},...,x_{i_{m}}) $ is called the generalized $N$-mean with
kernel $u,$ a terminology borrowed from $U$-statistics. Physical potentials
for systems of particles are also defined by generalized means.

This paper investigates whether various measure-theoretic concepts for
generalized $N$-means are equivalent to the analogous concepts for their
kernels: a.e. convergence of sequences, measurability, essential boundedness
and integrability with respect to absolutely continuous probability
measures. The answer is often, but not always, positive. This information is
crucial in some problems addressing the existence of generalized means
satisfying given conditions, such as the classical Inverse Problem of
statistical physics (in the canonical ensemble).
\end{abstract}

\section{Introduction\label{intro}}

Let $\Lambda $ be a set and let $g:\Lambda ^{m}\rightarrow \Bbb{R}$ be a
given function. If $N\geq m$ is an integer, the generalized $N$-mean of $g$
is the symmetric function $U:\Lambda ^{N}\rightarrow \Bbb{R}$ defined by $%
U(x_{1},...,x_{N}):=\frac{(N-m)!}{N!}\sum g(x_{i_{1}},...,x_{i_{m}}),$ where
the sum carries over all $m$-tuples $(i_{1},...,i_{m})\in \{1,...,N\}^{m}$
such that $i_{j}\neq i_{k}$ if $j\neq k$.

If $u(x_{1},...,x_{m}):=\frac{1}{m!}\sum g(x_{\sigma (1)},...,x_{\sigma
(m)}) $ where $\sigma $ runs over the permutations of $\{1,...,m\},$ then $u$
is symmetric and

\begin{equation}
U(x_{1},...,x_{N})=\left( 
\begin{array}{l}
N \\ 
m
\end{array}
\right) ^{-1}\sum_{1\leq i_{1}<\cdots <i_{m}\leq
N}u(x_{i_{1}},...,x_{i_{m}}).  \label{1}
\end{equation}
The function $u$ is customarily called the kernel of $U$ and $m$ is the
order (or degree) of $U.$ Obviously, $u=g$ if $m=1$ but, unlike $g,$ the
kernel $u$ is always uniquely determined by $U,$ a point to which we shall
return below.

Generalized means lie at the foundation of $U$-statistics, a field
extensively studied since its introduction by Hoeffding \cite{Ho48}. The
nomenclature (generalized mean, kernel, order) is taken from it. In $U$
-statistics, the kernel $u$ is given, the variables $x_{1},...,x_{N}$ are
replaced with random variables $X_{1},...,X_{N}$ and the interest centers on
the behavior of statistically relevant quantities as $N\rightarrow \infty .$

Physical potentials for systems of $N$ particles when the interaction of any 
$m<N$ particles is taken into account are also defined by generalized means
(without the scaling factor, an immaterial difference). The variable $x_{j}$
captures the relevant information about the $j^{th}$ particle (often more
than just its space position, so that $\Lambda $ need not be euclidean
space) and the energy $u(x_{i_{1}},...,x_{i_{m}})$ depends upon the
interaction of the particles $i_{1},...,i_{m}.$ In some important questions,
notably the famous ``Inverse Problem'' of statistical physics (\cite{ChCh84}
, \cite{ChChLi84}, \cite{Na14}), the potential $U$ is not given. Instead,
the issue is precisely the existence of a potential $U$ of the form (\ref{1}%
) satisfying given conditions.

For example, when the Inverse Problem is set up in the ``canonical
ensemble'', $N$ is fixed\footnote{%
A major difficulty; in the ``grand canonical ensemble'', $N$ is free and the
problem was solved by Chayes and Chayes \cite{ChCh84}.} and $U$ should
maximize a functional $\mathcal{F}(V)$ (relative entropy) over a class of
potentials $V$ of the form (\ref{1}) characterized by various integrability
conditions. Accordingly, the set $\Lambda $ is equipped with a measure $dx$
and a generalized $N$-mean $U$ of order $m$ is still defined by (\ref{1}),
but now with equality holding only a.e. on $\Lambda ^{N}$ for the product
measure. If a maximizing sequence $U_{n}$ is shown to have some type of
limit $U,$ is $U$ a generalized $N$-mean of order $m$? In the affirmative,
what are the properties of the kernel $u$ of $U$ that can be inferred from
the properties of $U$ ?

In this paper, we provide answers to the above queries which, apparently,
cannot be found elsewhere. These answers play a key role in the recent work 
\cite{Na14} by the first author and they should have value in the broad
existence question for generalized means. The existence of extremal
potentials is essential in particle physics; see for instance \cite
{NoChAyVo07} for a variant of the Inverse Problem arising in coarse-grain
modeling, but usually assumed (if not taken for granted). More generally,
there are numerous examples borrowing from information theory when
generalized means are sought that maximize some kind of entropy. Even though
the maximization involves only a finite number of parameters in many of
these examples, it should be expected that some of the more complex models
require a measure-theoretic setting.

From now on, the measure $dx$ on $\Lambda $ is $\sigma $-finite and complete
and, to avoid trivialities, $\Lambda $ has strictly positive $dx$ measure.
The $\sigma $-finiteness assumption ensures that the product measure $%
dx^{\otimes k}$ on $\Lambda ^{k}$ and its completion $d^{k}x$ are defined
and that the classical theorems (Fubini, Tonelli) are applicable. The
terminology ``a.e.'', ``null set'' or ``co-null subset'' (complement of a
null set) always refers to the measure $d^{k}x$ or its measurable subsets.
Since $d^{k}x$ is complete and $\Lambda ^{k}$ has positive $d^{k}x$ measure,
a co-null subset of $\Lambda ^{k}$ is always measurable and nonempty.

The symmetry of $u$ or $U$ is not needed in any of our results and will not
be assumed. Accordingly, the ``generalized mean operator'' $G_{m,N}$ given
by $G_{m,N}(u):=U$ is well defined (Remark \ref{rm1}) and linear on the
space of all a.e. finite functions on $\Lambda ^{m}.$ The exposition is
confined to real-valued functions, but everything can readily be extended to
the vector-valued case.

The recovery of the kernel $u$ from the generalized mean $U$ will play a
crucial role. There are elementary ways to proceed, which may explain why no
general procedure seems to be on record. (Lenth \cite{Le83} addresses the
completely different problem of finding $u$ when $U=U_{N}$ in (\ref{1}) is
known for every $N$ but the order $m$ is unknown.) However, the problem
becomes much more subtle after noticing that the most natural formulas for
the kernel are useless for measure-theoretic purposes. Indeed, after
suitable modifications of $u$ and $U$ on null sets, it may be assumed that (%
\ref{1}) holds pointwise. Then, if $m=1,$ we get $u(x)=U(x,...,x)$ but, of
course, this does not show whether the measurability of $U$ implies the
measurability of $u.$ When $m>1,$ other simple formulas for the kernel, for
instance $u(x_{1},x_{2})=\frac{N}{2}U(x_{1},x_{2},...,x_{2})-\frac{(N-2)}{2}
U(x_{2},...,x_{2})$ when $m=2,$ suffer from similar shortcomings.

More sophisticated representations of $u$ in terms of $U$ will be needed to
prove that, indeed, $u$ is measurable whenever $U$ is measurable (Theorem 
\ref{th5}). These representations call for the explicit introduction of the
``kernel operator'' $K_{m,N}$ inverse of $G_{m,N},$ i.e., $K_{m,N}(U)=u.$ We
shall show that, when $N>m\geq 2,$ $K_{m,N}(U)$ can be recovered from $U$
and the four operators $K_{m,N-1},K_{m-1,N},G_{N-1,N}$ and $G_{m-1,N-1}$
(see (\ref{17}) and also (\ref{15}) when $m=1;$ that $K_{m,m}=I$ is
trivial). It follows that many properties of the kernels can be established
by transfinite induction on the pairs $(m,N)$ with $m\leq N,$ totally
ordered by $(m,N)<(m^{\prime },N^{\prime })$ if $m<m^{\prime }$ or $%
m=m^{\prime }$ and $N<N^{\prime }.$ This method is systematically used
throughout the paper.

There are also unexpected differences between pointwise and a.e. limits of
generalized $N$-means. Suppose once again that $m=1$ and that $%
U_{n}(x_{1},...,x_{N})=N^{-1}\sum_{i=1}^{N}u_{n}(x_{i})$ for every $%
(x_{1},...,x_{N})\in \Lambda ^{N},$ so that $u_{n}(x)=U_{n}(x,...,x).$ If
the sequence $U_{n}$ has a pointwise limit $U,$ then $u_{n}$ has the
pointwise limit $u(x)=U(x,...,x).$ This is true irrespective of whether $U$
achieves infinite values on $\Lambda ^{N}.$ In contrast, if it is only
assumed that $U_{n}$ has an a.e. limit $U,$ the sequence $u_{n}(x)$ may have
no limit for \emph{any} $x\in \Lambda ;$ see Example \ref{ex1}. Nonetheless,
we shall see in the next section that if $U$ is a.e. finite, then $u_{n}(x)$
converges for a.e. $x\in \Lambda $ (Theorem \ref{th3}). Thus, the finiteness
of the limit $U,$ irrelevant when pointwise convergence is assumed \footnote{%
If $m>1,$ the finiteness of $U$ is not entirely irrelevant to the pointwise
convergence issue, which however remains markedly different from a.e.
convergence.}, makes a crucial difference when only a.e. convergence holds.

Measurability and essential boundedness are discussed in Section \ref
{measurability}. Section \ref{integrability} is devoted to more delicate
integrability issues. The general problem is as follows: A symmetric
probability density $P$ on $\Lambda ^{N}$ induces a natural symmetric
probability density $P_{(m)}$ on $\Lambda ^{m}$ with $m\leq N$ upon
integrating $P$ with respect to any set of $N-m$ variables. Is it true that
a generalized $N$-mean $U$ of order $m$ is in $L^{r}(\Lambda ^{N};Pd^{N}x)$
if and only if its kernel $u$ is in $L^{r}(\Lambda ^{m};P_{(m)}d^{m}x)$ ?

Assuming $P>0$ a.e., the answer is positive when $r=\infty ,$ but the
necessity may be false if $r<\infty $ (Example \ref{ex3}). This has
immediate and important consequences in existence questions. In a nutshell,
the problem of finding a generalized $N$-mean in $L^{r}(\Lambda
^{N};Pd^{N}x) $ is not always reducible to the problem of finding its kernel
in $L^{r}(\Lambda ^{m};P_{(m)}d^{m}x)$ and the former may have solutions
when the latter does not.

The next step is to investigate whether conditions on $P$ ensure that the
above discrepancies do not occur. Such a condition is given in Theorem \ref
{th12}. It always holds in some arbitrarily small perturbations of any
symmetric probability density (Theorem \ref{th14}) and, when $\Lambda $ has
finite $dx$ measure, it is even generic (in the sense of Baire category)
among bounded symmetric probability densities (Theorem \ref{th15}). This
supports the idea that, while probably not necessary, the condition in
question is sharp. Yet, it is instructive that it fails when $Pd^{N}x$ is
the probability that $N$ particles have given coordinates, under the natural
assumption that $P=0$ when two coordinates are equal. If so, the existence
of potentials $U=G_{m,N}(u)\in L^{r}(\Lambda ^{N};Pd^{N}x)$ with $u\notin
L^{r}(\Lambda ^{m};P_{(m)}d^{m}x)$ cannot be ruled out.

Our approach also provides good guidelines for the treatment of the more
general problem when (\ref{1}) is a weighted mean, but since there are
significant new technicalities, exceptional cases, etc., this problem is not
discussed.

\section{Almost everywhere convergence\label{convergence}}

In this section, we prove that a sequence $U_{n}$ of generalized $N$-means
of order $m$ has an a.e. finite limit $U$ if and only if the corresponding
sequence of kernels $u_{n}$ has an a.e. finite limit $u.$ The next example
shows that the ``only if'' part is false if $U$ is infinite.

\begin{example}
\label{ex1} With $\Lambda =[0,1]$ and $dx$ the Lebesgue measure, let $f_{n}$
denote the well-known sequence of characteristic functions of subintervals $%
J_{n}$ with $|J_{n}|\rightarrow 0,$ such that, when $x$ is fixed, $f_{n}(x)$
assumes both the values $0$ and $1$ for arbitrarily large $n$ (\cite[p. 94]
{Ha74}). If $u_{n}:=2n(1-f_{n}),$ then $u_{n}=0$ on $J_{n}$ and $u_{n}=2n$
otherwise. Also, $u_{n}(x)$ has no limit for any $x$ since $u_{n}(x)$
assumes both the values $0$ and $2n$ for arbitrarily large $n.$ On the other
hand, $U_{n}:=G_{1,2}(u_{n})$ is $0$ on $J_{n}\times J_{n}$ and either $n$
or $2n$ otherwise. As a result, $U_{n}$ tends to $\infty $ off the diagonal
of $[0,1]^{2}$ -a set of $d^{2}x$ measure $0$- since $(x_{1},x_{2})\notin
J_{n}\times J_{n}$ if $x_{1}\neq x_{2}$ and $n$ is large enough. Thus, $%
U_{n} $ has an a.e. limit but $u_{n}$ does not.
\end{example}

If $U_{n}=G_{m,N}(u_{n}),$ then (a) $u_{n}$ and $U_{n}$ can be modified on $%
n $-independent null sets of $\Lambda ^{m}$ and $\Lambda ^{N},$
respectively, in such a way that $u_{n}$ is everywhere finite and that $%
U_{n} $ is given by (\ref{1}) for every $(x_{1},...,x_{N})\in \Lambda ^{N}$
(so that $U_{n}$ is everywhere finite; see also Remark \ref{rm1} below). On
the other hand, (b) if $U_{n}\rightarrow U$ a.e., $U_{n}$ and $U$ can be
modified on null sets independent of $n$ in such a way that convergence
holds everywhere. However, (a) and (b) \emph{cannot be achieved
simultaneously}. Indeed, a modification of the left-hand side of (\ref{1})
on a null set of $\Lambda ^{N}$ need not preserve the sum structure of the
right-hand side, whereas such a modification may be needed to ensure that $%
U_{n}\rightarrow U$ pointwise. Thus, it is generally not possible to assume
both pointwise convergence and pointwise sum structure of $U_{n}.$ In fact,
if this were always possible, the sequence $u_{n}(x)$ of Example \ref{ex1}
would have the a.e. limit $U(x,x),$ which is false for every $x.$

We begin with a simple lemma.

\begin{lemma}
\label{lm1}Let $1\leq k\leq N$ be integers and let $T_{k}$ be a co-null set
of $\Lambda ^{k}.$ Then, the set $T_{N}:=\{(x_{1},...,x_{N})\in \Lambda
^{N}:(x_{j_{1}},...,x_{j_{k}})\in T_{k}$ for every $1\leq j_{1}<\cdots
<j_{k}\leq N\}$ is co-null in $\Lambda ^{N}.$
\end{lemma}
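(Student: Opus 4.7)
The plan is to work with the complement $T_N^c$ and show it is a $d^{N}x$-null subset of $\Lambda^N$. By the definition of $T_N$, one has
\[
T_N^c \;=\; \bigcup_{1\le j_1<\cdots<j_k\le N} \bigl\{(x_1,\ldots,x_N)\in\Lambda^N : (x_{j_1},\ldots,x_{j_k})\in T_k^c\bigr\},
\]
a finite union over the $\binom{N}{k}$ strictly increasing multi-indices. Since the lemma claims only that $T_N$ is co-null (not the stronger statement that the quantifier over multi-indices can be placed outside a single null set), it suffices to show each cylinder in this union is $d^{N}x$-null.

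Fix one multi-index $(j_1,\ldots,j_k)$. After a permutation of the coordinates of $\Lambda^N$, which is a measure-preserving bijection for $d^{N}x$, the corresponding cylinder becomes $T_k^c\times\Lambda^{N-k}$. Now $T_k$ is co-null by assumption and therefore $d^{k}x$-measurable with $d^{k}x(T_k^c)=0$, thanks to the completeness of $d^{k}x$ recalled in the introduction. By definition of the completion, I can enclose $T_k^c$ in a $dx^{\otimes k}$-measurable set $B$ with $dx^{\otimes k}(B)=0$.

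The next step is to show that $B\times\Lambda^{N-k}$ is $dx^{\otimes N}$-null. Exhausting $\Lambda^{N-k}$ by an increasing sequence of sets $F_n$ of finite $dx^{\otimes(N-k)}$-measure (possible by $\sigma$-finiteness of $dx$), Tonelli's theorem gives $dx^{\otimes N}(B\times F_n)=dx^{\otimes k}(B)\cdot dx^{\otimes(N-k)}(F_n)=0$, and countable additivity yields $dx^{\otimes N}(B\times\Lambda^{N-k})=0$. Hence $T_k^c\times\Lambda^{N-k}\subset B\times\Lambda^{N-k}$ is $d^{N}x$-null by completeness of $d^{N}x$, and pulling back through the coordinate permutation shows the original cylinder is $d^{N}x$-null as well. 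Summing (finitely) over the multi-indices finishes the proof.

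No serious obstacle arises; the only point requiring care is that $T_k$ is merely co-null rather than assumed measurable, so one must invoke completeness of $d^{k}x$ to conclude $T_k^c$ is measurable, and then pass through a $dx^{\otimes k}$-measurable cover before applying Tonelli, which is stated for the uncompleted product measure.
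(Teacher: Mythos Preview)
Your proof is correct and follows essentially the same approach as the paper: both arguments express $T_N$ as the finite intersection of the cylinders $\{(x_1,\ldots,x_N):(x_{j_1},\ldots,x_{j_k})\in T_k\}$ and show each such cylinder is co-null by identifying it (up to a coordinate permutation) with a product $T_k\times\Lambda^{N-k}$. Your write-up is more explicit about the measure-theoretic housekeeping (completeness of $d^kx$, passing to a $dx^{\otimes k}$-measurable cover before invoking the product formula), whereas the paper handles these points more tersely, but the underlying idea is identical.
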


\begin{proof}
As a preamble, note that the product $S\times T$ of two co-null sets $S$ and 
$T$ in $\Lambda ^{j}$ and $\Lambda ^{\ell },$ respectively, is co-null in
the product $\Lambda ^{j+\ell }$ since its complement $\left( (\Lambda
^{j}\backslash S)\times \Lambda ^{\ell }\right) \cup \left( \Lambda
^{j}\times (\Lambda ^{\ell }\backslash T)\right) $ is a null set in $\Lambda
^{j+\ell }.$ This feature is immediately extended to any finite product of
co-null sets. Now, $T_{N}$ is the intersection of the sets $%
\{(x_{1},...,x_{N})\in \Lambda ^{N}:(x_{j_{1}},...,x_{j_{k}})\in T_{k}\}$
for fixed $1\leq j_{1}<\cdots <j_{k}\leq N.$ Such a set is the product of $%
N-k$ copies of $\Lambda $ and one copy of $T_{k}$ and therefore co-null in $%
\Lambda ^{N}$ from the above. Thus, $T_{N}$ is co-null.
\end{proof}

\begin{remark}
\label{rm1}It follows at once from Lemma \ref{lm1} that an a.e. modification
of $u$ creates only an a.e. modification of $G_{m,N}(u).$
\end{remark}

The next lemma is the special case $m=1$ of Theorem \ref{th3} below.

\begin{lemma}
\label{lm2}Let $U_{n}$ be a sequence of generalized $N$-means of order $1$
and let $u_{n}$ denote the corresponding sequence of kernels (i.e., $%
U_{n}=G_{1,N}(u_{n})$). Then, there is an a.e. finite function $U$ on $%
\Lambda ^{N}$ such that $U_{n}\rightarrow U$ a.e. if and only if there is an
a.e. finite function $u$ on $\Lambda $ such that $u_{n}\rightarrow u$ a.e.
on $\Lambda .$ If so, $U=G_{1,N}(u).$\newline
\end{lemma}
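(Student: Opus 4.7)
The ``if'' direction is straightforward: if $u_n\to u$ a.e.\ on $\Lambda$ with $u$ a.e.\ finite, apply \lemref{lm1} with $k=1$ and $T_1$ the (co-null) set on which convergence holds and $u$ is finite; this produces a co-null set in $\Lambda^N$ on which $u_n(x_i)\to u(x_i)\in\mathbb{R}$ for every $i$ simultaneously, and then linearity of $G_{1,N}$ gives $U_n\to G_{1,N}(u)$ a.e., which identifies $U$ with $G_{1,N}(u)$ a.e.

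For the ``only if'' direction, the starting point is the identity
\[
u_n(x)-u_n(y)=N\bigl(U_n(x,x_2,\ldots,x_N)-U_n(y,x_2,\ldots,x_N)\bigr),
\]
which holds for all $x,y,x_2,\ldots,x_N$ once a single $n$-independent modification of each $u_n$ has been made as in item (a) of the discussion preceding \lemref{lm1}. Let $A\subset\Lambda^N$ be the co-null set on which $U_n\to U\in\mathbb{R}$. By Fubini applied to the decomposition $\Lambda^N=\Lambda\times\Lambda\times\Lambda^{N-2}$, I can choose $(x_2,\ldots,x_N)\in\Lambda^{N-2}$ so that for $d^2x$-a.e.\ $(x,y)\in\Lambda^2$ both $U_n(x,x_2,\ldots,x_N)$ and $U_n(y,x_2,\ldots,x_N)$ converge to finite limits. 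Hence $u_n(x)-u_n(y)$ converges to a finite limit for a.e.\ $(x,y)\in\Lambda^2$. Applying Fubini once more to this co-null set in $\Lambda^2$, I pick an $a\in\Lambda$ such that $u_n(x)-u_n(a)\to\phi(x)\in\mathbb{R}$ for a.e.\ $x\in\Lambda$, where $\phi$ is a.e.\ finite on $\Lambda$.

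The crucial and only delicate step is upgrading this to convergence of the constants $u_n(a)$ themselves; this is where the hypothesis that $U$ is a.e.\ \emph{finite} enters (and where \exref{ex1} would otherwise block the argument). Writing
\[
N\,u_n(a)=\sum_{i=1}^N u_n(x_i)-\sum_{i=1}^N\bigl(u_n(x_i)-u_n(a)\bigr)=N\,U_n(x_1,\ldots,x_N)-\sum_{i=1}^N\bigl(u_n(x_i)-u_n(a)\bigr),
\]
I intersect $A$ with the co-null set from \lemref{lm1} associated with the co-null $T_1\subset\Lambda$ on which $u_n(\cdot)-u_n(a)\to\phi(\cdot)\in\mathbb{R}$. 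This intersection is co-null, hence nonempty, and picking any point in it makes the right-hand side converge to a finite limit. Thus $u_n(a)\to\alpha\in\mathbb{R}$, and defining $u(x):=\phi(x)+\alpha$ on the relevant co-null set (extended arbitrarily elsewhere) yields $u_n(x)=\bigl(u_n(x)-u_n(a)\bigr)+u_n(a)\to u(x)$ for a.e.\ $x$. The equality $U=G_{1,N}(u)$ a.e.\ follows from the already established ``if'' direction. The main obstacle is precisely this passage from differences to pointwise convergence, which cannot be bypassed without invoking the a.e.\ finiteness of $U$.
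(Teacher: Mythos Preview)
Your proof is correct and follows essentially the same route as the paper: freeze $N-1$ coordinates via Fubini so that $u_n$ plus an $n$-dependent constant converges a.e.\ on $\Lambda$, then extract convergence of that constant by evaluating $NU_n$ at a single point of a co-null set in $\Lambda^N$; your preliminary passage through the differences $u_n(x)-u_n(y)$ and the anchor point $a$ is just a repackaging of the paper's direct use of $u_n(x)+\sum_{j\ge 2}u_n(\widetilde x_j)$. One indexing slip to fix: the tuple $(x_2,\ldots,x_N)$ lies in $\Lambda^{N-1}$, not $\Lambda^{N-2}$, and the relevant Fubini decomposition is $\Lambda^N=\Lambda\times\Lambda^{N-1}$ (after which the product of the co-null section with itself gives your co-null set in $\Lambda^2$).
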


\begin{proof}
Assume first that $u$ exists. With no loss of generality, we may also assume
that $u_{n}$ and $u$ are everywhere defined and that $u_{n}\rightarrow u$
pointwise on $\Lambda .$ Then, $U_{n}\rightarrow G_{1,N}(u)$ pointwise.

Conversely, assume that $U$ exists. With no loss of generality, we may also
assume that $u_{n}$ and $U$ are everywhere finite and that $%
U_{n}(x_{1},...,x_{N})=N^{-1}\sum_{i=1}^{N}u_{n}(x_{i})\in \Bbb{R}$ for
every $(x_{1},...,x_{N})\in \Lambda ^{N}.$ By hypothesis, 
\begin{equation}
U_{n}(x_{1},...,x_{N})\rightarrow U(x_{1},...,x_{N}),\text{ }  \label{2}
\end{equation}
for a.e. $(x_{1},...,x_{N})\in \Lambda ^{N},$ but recall that it cannot be
assumed that $U_{n}\rightarrow U$ pointwise; see the comments after Example 
\ref{ex1}.

Let $E\subset \Lambda ^{N}$ denote the co-null set on which (\ref{2}) holds.
There is a co-null set $\widetilde{S}_{N-1}$ of $\Lambda ^{N-1}$ such that,
for every $(\widetilde{x}_{2},...,\widetilde{x}_{N})\in \widetilde{S}_{N-1},$
the subset $E_{\widetilde{x}_{2},...,\widetilde{x}_{N}}:=\{x\in \Lambda :(x,%
\widetilde{x}_{2},...,\widetilde{x}_{N})\in E\}$ is co-null in $\Lambda .$
This is merely a rephrasing of the fact that a.e. $(\widetilde{x}_{2},...,%
\widetilde{x}_{N})$-section of the complement of $E$ is a null set of $%
\Lambda .$

From now on, $(\widetilde{x}_{2},...,\widetilde{x}_{N})\in \widetilde{S}
_{N-1}$ is chosen once and for all. By definition of $\widetilde{S}_{N-1},$
if $x\in E_{\widetilde{x}_{2},...,\widetilde{x}_{N}},$ then 
\begin{equation}
u_{n}(x)+u_{n}(\widetilde{x}_{2})+\cdots +u_{n}(\widetilde{x}
_{N})\rightarrow NU(x,\widetilde{x}_{2},...,\widetilde{x}_{N}).  \label{3}
\end{equation}
Thus, if $(x_{1},...,x_{N})\in E_{\widetilde{x}_{2},...,\widetilde{x}
_{N}}^{N},$ (\ref{3}) implies 
\begin{equation*}
u_{n}(x_{j})+u_{n}(\widetilde{x}_{2})+\cdots +u_{n}(\widetilde{x}
_{N})\rightarrow NU(x_{j},\widetilde{x}_{2},...,\widetilde{x}_{N}),1\leq
j\leq N
\end{equation*}
and so, by addition, 
\begin{equation}
u_{n}(x_{1})+\cdots +u_{n}(x_{N})+N(u_{n}(\widetilde{x}_{2})+\cdots +u_{n}(%
\widetilde{x}_{N}))\rightarrow N\widetilde{l}(x_{1},...,x_{N}),  \label{4}
\end{equation}
where $\widetilde{l}(x_{1},...,x_{N}):=\Sigma _{j=1}^{N}U(x_{j},\widetilde{x}
_{2},...,\widetilde{x}_{N}),$ i.e., 
\begin{equation}
\widetilde{l}=NG_{1,N}(U(\cdot ,\widetilde{x}_{2},...,\widetilde{x}_{N})).
\label{5}
\end{equation}

Since both $E$ and $E_{\widetilde{x}_{2},...,\widetilde{x}_{N}}^{N}$ are
co-null in $\Lambda ^{N},$ their intersection $E\cap E_{\widetilde{x}%
_{2},...,\widetilde{x}_{N}}^{N}$ is co-null. Let $(x_{1},...,x_{N})\in E\cap
E_{\widetilde{x}_{2},...,\widetilde{x}_{N}}^{N}$ be chosen once and for all,
so that (\ref{2}) and (\ref{4}) hold. This implies that $u_{n}(\widetilde{x}%
_{2})+\cdots +u_{n}(\widetilde{x}_{N})$ has a finite limit, namely $%
\widetilde{l}(x_{1},...,x_{N})-U(x_{1},...,x_{N}).$ Then, by (\ref{3}) and (%
\ref{5}), $u_{n}(x)$ tends to the finite limit 
\begin{multline}
u(x):=  \label{6} \\
NU(x,\widetilde{x}_{2},...,\widetilde{x}_{N})+U(x_{1},...,x_{N})-NG_{1,N}(U(%
\cdot ,\widetilde{x}_{2},...,\widetilde{x}_{N}))(x_{1},...,x_{N}),
\end{multline}
for every $x\in E_{\widetilde{x}_{2},...,\widetilde{x}_{N}}.$ Thus, $%
u_{n}\rightarrow u$ a.e. on $\Lambda $ and so, by the first part of the
proof, $U=G_{1,N}(u).$
\end{proof}

\begin{theorem}
\label{th3}Let $U_{n}$ be a sequence of generalized $N$-means of order $%
m\geq 1$ and let $u_{n}$ denote the corresponding sequence of kernels (i.e., 
$U_{n}=G_{m,N}(u_{n})$). Then, there is an a.e. finite function $U$ on $%
\Lambda ^{N}$ such that $U_{n}\rightarrow U$ a.e. if and only if there is an
a.e. finite function $u$ on $\Lambda ^{m}$ such that $u_{n}\rightarrow u$
a.e. on $\Lambda ^{m}.$ If so, $U=G_{m,N}(u).$
\end{theorem}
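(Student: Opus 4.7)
The plan is to proceed by transfinite induction on the pair $(m,N)$ with $m\leq N$ in the lexicographic order described in Section~\ref{intro}: $(m,N)<(m',N')$ if $m<m'$, or $m=m'$ and $N<N'$. The base case $m=1$ (any $N$) is precisely Lemma~\ref{lm2}, and the diagonal case $m=N$ is immediate, since~(\ref{1}) then reduces to $U_n=u_n$ and the two convergence conditions coincide. It therefore suffices to treat the inductive step at a pair $(m,N)$ with $2\leq m<N$, assuming the theorem at every strictly smaller pair.

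For the sufficiency direction, assume $u_n\to u$ a.e.\ on $\Lambda^m$ with $u$ a.e.\ finite. Using Remark~\ref{rm1}, I would modify $u_n$ and $u$ on an $n$-independent null set so that they become everywhere finite and $u_n\to u$ pointwise on a fixed co-null set $T_m\subset\Lambda^m$. Lemma~\ref{lm1} with $k=m$ then supplies a co-null set $T_N\subset\Lambda^N$ on which every ordered $m$-subtuple of coordinates lies in $T_m$; on $T_N$ each summand in~(\ref{1}) converges, so $U_n\to G_{m,N}(u)$ pointwise on $T_N$. This identifies the limit as $U=G_{m,N}(u)$ a.e.

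For the necessity direction, assume $U_n=G_{m,N}(u_n)\to U$ a.e.\ with $U$ a.e.\ finite. I would first pick $\widetilde{x}_N$ in a co-null subset of $\Lambda$ for which the Fubini slice $V_n:=U_n(\cdot,\widetilde{x}_N)$ converges a.e.\ on $\Lambda^{N-1}$ to an a.e.\ finite limit. Splitting the sum in~(\ref{1}) according to whether the largest index $i_m$ equals $N$ (using $\binom{N-1}{m}/\binom{N}{m}=(N-m)/N$ and $\binom{N-1}{m-1}/\binom{N}{m}=m/N$) yields the key identity
\[V_n \;=\; \frac{N-m}{N}\,G_{m,N-1}(u_n)\;+\;\frac{m}{N}\,G_{m-1,N-1}(\widetilde{u}_n),\qquad \widetilde{u}_n(y_1,\ldots,y_{m-1}):=u_n(y_1,\ldots,y_{m-1},\widetilde{x}_N).\]
If the second summand on the right were known to converge a.e.\ to an a.e.\ finite limit, then rearranging the identity (possible since $N>m$) would give a.e.\ convergence of $G_{m,N-1}(u_n)$ to a finite limit, and the inductive hypothesis at the strictly smaller pair $(m,N-1)$ would then yield $u_n\to u$ a.e.\ on $\Lambda^m$. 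The sufficiency direction would identify $U=G_{m,N}(u)$, closing the induction.

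The hard part is therefore to establish the a.e.\ convergence of $G_{m-1,N-1}(\widetilde{u}_n)$. I would attack it by imitating the device that led to~(\ref{6}) in the proof of Lemma~\ref{lm2}: in addition to the sliced point $\widetilde{x}_N$, choose an auxiliary generic tuple $(x_1,\ldots,x_{N-1})\in\Lambda^{N-1}$ together with further generic points $\widetilde{x}_{N-1},\widetilde{x}_{N-2},\ldots$ on which $V_n\to V$ and enough additional Fubini slices of $U_n$ converge simultaneously. Each such further slicing produces a fresh linear relation among generalized means of $u_n$ and of its restrictions obtained by fixing one or more last coordinates; every such restriction is a kernel of strictly smaller order $m'<m$ or over a strictly smaller power $\Lambda^{N'}$, so by the inductive hypothesis at $(m',N')<(m,N)$ its a.e.\ convergence is already equivalent to that of its generalized mean. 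Combining sufficiently many of these relations should allow one to solve for $G_{m-1,N-1}(\widetilde{u}_n)$ as an a.e.-convergent linear combination of quantities governed by the inductive hypothesis, completing the argument.
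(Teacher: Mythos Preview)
Your framework is right: transfinite induction on $(m,N)$, the base cases $m=1$ and $m=N$, slicing at a generic $\widetilde{x}_N$, and the identity
\[
V_n=\frac{N-m}{N}\,G_{m,N-1}(u_n)+\frac{m}{N}\,G_{m-1,N-1}(\widetilde u_n)\quad\text{on }\Lambda^{N-1}
\]
are exactly how the paper begins. You also correctly isolate the crux: one must show that $G_{m-1,N-1}(\widetilde u_n)$ converges a.e.\ to something finite, so that the $G_{m,N-1}(u_n)$ term can be peeled off and the induction at $(m,N-1)$ applied.

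The gap is in your last paragraph. ``Further slicing'' introduces \emph{new} unknowns (restrictions such as $u_n(\cdot,\widetilde y_{N-1})$, $u_n(\cdot,\widetilde y_{N-1},\widetilde x_N)$, \dots) faster than it produces relations, and the inductive hypothesis at smaller pairs only says that convergence of a generalized mean is \emph{equivalent} to convergence of its kernel; it gives no mechanism for separating a sum of generalized means of different orders into its summands. The phrase ``combining sufficiently many of these relations should allow one to solve'' is precisely the step that needs an idea, and you have not supplied one.

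The paper's device is not to slice further but to \emph{lift} the slice identity back to $\Lambda^N$: apply $G_{N-1,N}$ to $V_n$, i.e.\ sum the identity over the $N$ ways of omitting one coordinate from $(x_1,\dots,x_N)$. A counting argument (each $m$-tuple is missed by exactly $N-m$ of the $k$'s, each $(m-1)$-tuple by $N-m+1$) turns the right-hand side into
\[
(N-m)\,U_n+m\,G_{m-1,N}(\widetilde u_n)\quad\text{on }\Lambda^N,
\]
and the left-hand side converges a.e.\ on $\Lambda^N$ by Lemma~\ref{lm1}. Since $U_n\to U$ a.e., this isolates $G_{m-1,N}(\widetilde u_n)$ as an a.e.\ convergent sequence with finite limit, so the induction at $(m-1,N)$ gives $\widetilde u_n\to v$ a.e.\ on $\Lambda^{m-1}$; then $G_{m-1,N-1}(\widetilde u_n)\to G_{m-1,N-1}(v)$ a.e., and your intended conclusion via $(m,N-1)$ goes through. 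The single missing idea is this symmetrization $V_n\mapsto G_{N-1,N}(V_n)$, which trades one of the two unknown terms for the known $U_n$.
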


\begin{proof}
As in the proof of Lemma \ref{lm2} when $m=1,$ the sufficiency is
straightforward. We now assume that $U$ exists. With no loss of generality,
we may also assume that $u_{n}$ and $U$ are everywhere finite and that 
\begin{equation}
U_{n}(x_{1},...,x_{N})=\left( 
\begin{array}{l}
N \\ 
m
\end{array}
\right) ^{-1}\sum_{1\leq i_{1}<\cdots <i_{m}\leq
N}u_{n}(x_{i_{1}},...,x_{i_{m}})\in \Bbb{R},  \label{7}
\end{equation}
for every $(x_{1},...,x_{N})\in \Lambda ^{N}.$ By hypothesis, 
\begin{equation}
U_{n}(x_{1},...,x_{N})\rightarrow U(x_{1},...,x_{N}),\text{ }  \label{8}
\end{equation}
for a.e. $(x_{1},...,x_{N})\in \Lambda ^{N}.$ Once again, it cannot also be
assumed that $U_{n}\rightarrow U$ pointwise.

The proof that $u_{n}$ has an a.e. finite limit $u$ will proceed by
transfinite induction on the pairs $(m,N)$ with $N\geq m,$ totally ordered
by $(m,N)<(m^{\prime },N^{\prime })$ if $m<m^{\prime }$ or if $m=m^{\prime }$
and $N<N^{\prime }.$ For the pairs $(m,m)$ (with no predecessor), this must
be proved directly, but is trivial since $u_{n}=U_{n}$ in this case. If $%
m=1, $ the result follows from Lemma \ref{lm2} irrespective of $N.$
Accordingly, we assume $m\geq 2$ and that the result is true for generalized 
$N$-means of order $m-1$ with $N\geq m-1$ (hypothesis of induction on $m,$
which is satisfied when $m=2$) and for generalized $(N-1)$-means of order $m$
with $N\geq m+1$ (hypothesis of induction on $N$) and show that it remains
true for $u_{n}$ above.

Let $E\subset \Lambda ^{N}$ denote the co-null set on which (\ref{8}) holds.
There is a co-null set $\widetilde{S}$ of $\Lambda $ such that, for every $%
\widetilde{x}_{N}\in \widetilde{S},$ the subset $E_{\widetilde{x}
_{N}}:=\{(x_{1},...,x_{N-1})\in \Lambda ^{N-1}:(x_{1},...,x_{N-1},\widetilde{%
x}_{N})\in E\}$ is co-null in $\Lambda ^{N-1}.$ Let $\widetilde{x}_{N}\in $ $%
\widetilde{S}$ be chosen once and for all. If $(x_{1},...,x_{N-1})\in E_{%
\widetilde{x}_{N}},$ then by splitting the cases when $i_{m}\leq N-1$ and $%
i_{m}=N$ in the right-hand side of (\ref{7}), we may rewrite (\ref{8}) with $%
x_{N}=\widetilde{x}_{N}$ as 
\begin{multline}
\sum_{1\leq i_{1}<\cdots <i_{m}\leq
N-1}u_{n}(x_{i_{1}},...,x_{i_{m}})+\sum_{1\leq i_{1}<\cdots <i_{m-1}\leq
N-1}u_{n}(x_{i_{1}},...,x_{i_{m-1}},\widetilde{x}_{N})\rightarrow  \label{9}
\\
\left( 
\begin{array}{l}
N \\ 
m
\end{array}
\right) U(x_{1},...,x_{N-1},\widetilde{x}_{N}).
\end{multline}

By Lemma \ref{lm1}, the set $\widetilde{\Omega }:=\{(x_{1},...,x_{N})\in
\Lambda ^{N}:(x_{j_{1}},...,x_{j_{N-1}})\in E_{\widetilde{x}_{N}}$ for every 
$1\leq j_{1}<\cdots <j_{N-1}\leq N\}$ is co-null in $\Lambda ^{N}$ and, if $%
(x_{1},...,x_{N})\in \widetilde{\Omega },$ then (\ref{9}) holds with $%
(x_{1},...,x_{N-1})$ replaced with $(x_{j_{1}},...,x_{j_{N-1}})$ and $1\leq
j_{1}<\cdots <j_{N-1}\leq N$ fixed. If so, the $m$-tuple $%
(x_{i_{1}},...,x_{i_{m}})$ in the left-hand side of (\ref{9}) becomes $%
(x_{j_{i_{1}}},...,x_{j_{i_{m}}}).$ Since $j_{1}<\cdots <j_{N-1}$ and $%
i_{1}<\cdots <i_{m},$ it follows that $j_{i_{1}}<\cdots <j_{i_{m}}.$ We do
not write down the corresponding formula (\ref{9}) for this case, because
there is a simpler (and otherwise crucial) way to formulate it without
involving sub-subscripts, as detailed below.

There are only $N$ different ways to pick $N-1$ variables $%
x_{j_{1}},...,x_{j_{N-1}}$ with increasing indices among the $N$ variables $%
x_{1},...,x_{N},$ that is, by omitting a different variable $x_{k},1\leq
k\leq N.$ For every such $k,$ the variables $x_{j_{1}},...,x_{j_{N-1}}$ are
then $x_{1},...,\widehat{x_{k}},...,x_{N}$ where, as is customary, $\widehat{
x_{k}}$ means that $x_{k}$ is omitted. Thus, as $i_{1},...,i_{m}$ run over
all the indices such that $1\leq i_{1}<\cdots <i_{m}\leq N-1,$ the
corresponding $m$-tuples $(x_{j_{i_{1}}},...,x_{j_{i_{m}}})$ run over all
the $m$-tuples $(x_{\ell _{1}},...,x_{\ell _{m}})$ in which the variable $%
x_{k}$ does not appear, that is, over the $m$-tuples $(x_{\ell
_{1}},...,x_{\ell _{m}})$ with $1\leq \ell _{1}<\cdots <\ell _{m}\leq N$ and 
$\ell _{1}\neq k,...,\ell _{m}\neq k.$

In light of the above, when $(x_{i_{1}},...,x_{i_{m}})$ is replaced with $%
(x_{j_{i_{1}}},...,x_{j_{i_{m}}})$ in (\ref{9}) and the variable $x_{k}$
does not appear, the formula (\ref{9}) takes the form 
\begin{multline}
\sum_{\left\{ 
\begin{array}{c}
1\leq \ell _{1}<\cdots <\ell _{m}\leq N \\ 
\ell _{1}\neq k,...,\ell _{m}\neq k
\end{array}
\right. }u_{n}(x_{\ell _{1}},...,x_{\ell _{m}})+  \label{10} \\
\sum_{\left\{ 
\begin{array}{c}
1\leq \ell _{1}<\cdots <\ell _{m-1}\leq N \\ 
\ell _{1}\neq k,...,\ell _{m-1}\neq k
\end{array}
\right. }u_{n}(x_{\ell _{1}},...,x_{\ell _{m-1}},\widetilde{x}
_{N})\rightarrow \left( 
\begin{array}{l}
N \\ 
m
\end{array}
\right) U(\underset{N-1\text{ variables}}{\underbrace{x_{1},..\widehat{x_{k}}
,...,x_{N}}},\widetilde{x}_{N}).
\end{multline}
Recall that (\ref{10}) holds for every $1\leq k\leq N$ and every $%
(x_{1},...,x_{N})\in \widetilde{\Omega }.$ By adding up the relations (\ref
{10}) for $k=1,...,N,$ we get (after replacing $\ell _{1},...,\ell _{m}$
with $i_{1},...,i_{m}$) 
\begin{multline}
\sum_{k=1}^{N}\sum_{\left\{ 
\begin{array}{c}
1\leq i_{1}<\cdots <i_{m}\leq N \\ 
i_{1}\neq k,...,i_{m}\neq k
\end{array}
\right. }u_{n}(x_{i_{1}},...,x_{i_{m}})+  \label{11} \\
\sum_{k=1}^{N}\sum_{\left\{ 
\begin{array}{c}
1\leq i_{1}<\cdots <i_{m-1}\leq N \\ 
i_{1}\neq k,...,i_{m-1}\neq k
\end{array}
\right. }u_{n}(x_{i_{1}},...,x_{i_{m-1}},\widetilde{x}_{N})\rightarrow
\left( 
\begin{array}{l}
N \\ 
m
\end{array}
\right) \widetilde{l}(x_{1},...,x_{N}),
\end{multline}
for every $(x_{1},...,x_{N})\in \widetilde{\Omega },$ where $\widetilde{l}
(x_{1},...,x_{N}):=\Sigma _{k=1}^{N}U(x_{1},...\widehat{x_{k}},...,x_{N},%
\widetilde{x}_{N}),$ i.e., 
\begin{equation}
\widetilde{l}=NG_{N-1,N}(U(\cdot ,\widetilde{x}_{N})).  \label{12}
\end{equation}

Let $1\leq i_{1}<\cdots <i_{m}\leq N$ be fixed. In the first double sum of (%
\ref{11}), $u_{n}(x_{i_{1}},...,x_{i_{m}})$ appears exactly once for every
index $1\leq k\leq N$ such that $k\neq i_{1},...,k\neq i_{m}.$ Evidently,
there are $N-m$ such indices $k$ and so 
\begin{multline*}
\sum_{k=1}^{N}\sum_{\left\{ 
\begin{array}{c}
1\leq i_{1}<\cdots <i_{m}\leq N \\ 
i_{1}\neq k,...,i_{m}\neq k
\end{array}
\right. }u_{n}(x_{i_{1}},...,x_{i_{m}})= \\
(N-m)\sum_{1\leq i_{1}<\cdots <i_{m}\leq
N}u_{n}(x_{i_{1}},...,x_{i_{m}})=(N-m)\left( 
\begin{array}{l}
N \\ 
m
\end{array}
\right) U_{n}(x_{1},...,x_{N}).
\end{multline*}

The second double sum in (\ref{11}) has the same structure as the first one,
with $m$ replaced with $m-1.$ Thus, 
\begin{multline*}
\sum_{k=1}^{N}\sum_{\left\{ 
\begin{array}{c}
1\leq i_{1}<\cdots <i_{m-1}\leq N \\ 
i_{1}\neq k,...,i_{m-1}\neq k
\end{array}
\right. }u_{n}(x_{i_{1}},...,x_{i_{m-1}},\widetilde{x}_{N})= \\
(N-m+1)\sum_{1\leq i_{1}<\cdots <i_{m-1}\leq
N}u_{n}(x_{i_{1}},...,x_{i_{m-1}},\widetilde{x}_{N})= \\
m\left( 
\begin{array}{l}
N \\ 
m
\end{array}
\right) G_{m-1,N}(u_{n}(\cdot ,\widetilde{x}_{N}))(x_{1},...,x_{N}).
\end{multline*}
Hence, (\ref{11}) boils down to (recall (\ref{12})) 
\begin{equation*}
(N-m)U_{n}+mG_{m-1,N}(u_{n}(\cdot ,\widetilde{x}_{N}))\rightarrow
NG_{N-1,N}(U(\cdot ,\widetilde{x}_{N}))\text{ on }\widetilde{\Omega }.
\end{equation*}
By (\ref{8}), $U_{n}\rightarrow U$ on $E.$ Thus, 
\begin{equation}
G_{m-1,N}(u_{n}(\cdot ,\widetilde{x}_{N}))\rightarrow m^{-1}\left(
NG_{N-1,N}(U(\cdot ,\widetilde{x}_{N}))-(N-m)U\right) \text{ on }\widetilde{
\Omega }\cap E.  \label{13}
\end{equation}
Since $\widetilde{\Omega }\cap E$ is co-null in $\Lambda ^{N},$ this means
that the sequence $G_{m-1,N}(u_{n}(\cdot ,\widetilde{x}_{N}))$ has an a.e.
finite limit. Therefore, the hypothesis of induction on $m$ ensures that
there is an a.e. finite function $v$ on $\Lambda ^{m-1}$ such that $%
u_{n}(\cdot ,\widetilde{x}_{N})\rightarrow v$ a.e. on $\Lambda ^{m-1}.$
Thus, $G_{m-1,N-1}(u(\cdot ,\widetilde{x}_{N}))\rightarrow $ $G_{m-1,N-1}(v)$
on some co-null subset $T_{N-1}$ of $\Lambda ^{N-1}$ and then, from (\ref{9}%
), 
\begin{equation}
G_{m,N-1}(u_{n})\rightarrow \frac{N}{N-m}U(\cdot ,\widetilde{x}_{N})-\frac{m%
}{N-m}G_{m-1,N-1}(v)\text{ on }T_{N-1}\cap E_{\widetilde{x}_{N}}.  \label{14}
\end{equation}
Since $T_{N-1}\cap E_{\widetilde{x}_{N}}$ is co-null in $\Lambda ^{N-1},$
this shows that $G_{m,N-1}(u_{n})$ has an a.e. finite limit. That $u_{n}$
has an a.e. finite limit $u$ thus follows from the hypothesis of induction
on $N$ and so, by the first part of the proof, $U=G_{m,N}(u).$
\end{proof}

The following corollary gives a short rigorous proof of an otherwise
intuitively clear property.

\begin{corollary}
\label{cor4}A generalized $N$-mean $U$ of order $m$ has a unique kernel $u$
(up to modifications on a null set).
\end{corollary}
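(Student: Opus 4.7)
The plan is to deduce uniqueness directly from Theorem \ref{th3} by exploiting the equivalence between a.e. convergence of generalized means and a.e. convergence of their kernels, applied to a cleverly chosen sequence.

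Suppose $u_{1}$ and $u_{2}$ are two kernels of the same generalized $N$-mean $U$, i.e., $G_{m,N}(u_{1})=U=G_{m,N}(u_{2})$ a.e. on $\Lambda^{N}$. I would define the alternating sequence $u_{n}:=u_{1}$ if $n$ is odd and $u_{n}:=u_{2}$ if $n$ is even, with corresponding generalized means $U_{n}:=G_{m,N}(u_{n})$. Since both $G_{m,N}(u_{1})$ and $G_{m,N}(u_{2})$ coincide with $U$ a.e., we have $U_{n}=U$ a.e. on $\Lambda^{N}$ for every $n$ (on a common co-null set, the intersection of the two a.e. agreement sets). In particular, the sequence $U_{n}$ converges a.e. to the a.e. finite function $U$.

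Theorem \ref{th3} then guarantees that the sequence of kernels $u_{n}$ itself converges a.e. to some a.e. finite function $u$ on $\Lambda^{m}$. But by construction $u_{n}(y)$ takes only the two values $u_{1}(y)$ and $u_{2}(y)$ infinitely often for each $y\in\Lambda^{m}$, so the existence of a pointwise limit at $y$ forces $u_{1}(y)=u_{2}(y)$. Consequently $u_{1}=u_{2}$ a.e. on $\Lambda^{m}$, which is the desired uniqueness.

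There is essentially no obstacle here: the entire content of the corollary is packaged into Theorem \ref{th3}, and the alternating-sequence trick is the standard device for turning an equivalence of a.e. convergence statements into a uniqueness statement. The only point worth checking is that Theorem \ref{th3} is applied in the easy direction (the a.e. finiteness of the limit $U$ is granted because $U$ is already assumed to be a well-defined generalized $N$-mean, hence a.e. finite), so no extra hypothesis is needed.
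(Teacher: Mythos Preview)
Your proof is correct and follows exactly the same approach as the paper: the alternating-sequence trick applied together with Theorem~\ref{th3}. The only difference is that you spell out a few details (the common co-null set, why convergence forces $u_{1}(y)=u_{2}(y)$) that the paper leaves implicit.
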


\begin{proof}
If $u$ and $v$ are two kernels of $U,$ set $u_{n}=u$ if $n$ is odd and $%
u_{n}=v$ if $n$ is even. Then, $G_{m,N}(u_{n})=U$ a.e. for every $n.$ By
Theorem \ref{th3}, $u_{n}$ converges a.e., which implies $u=v$ a.e.
\end{proof}

Significant differences between $m=1$ and $m>1$ are worth pointing out
(without proof, for brevity). When $m=1,$ a stronger form of Lemma \ref{lm2}
holds: If $G_{1,N}(u_{n})\rightarrow U$ a.e. with $U$ finite on a subset of $%
\Lambda ^{N}$ of positive $d^{N}x$ measure, then $u_{n}\rightarrow u$ a.e.
and $u$ is finite on a subset of $\Lambda $ of positive $dx$ measure (the
converse is clearly false). There is no such improvement of Theorem \ref{th3}
if $m>1.$ It is not hard to find examples (variants of Example \ref{ex1})
when $G_{m,N}(u_{n})$ has an a.e. limit which is finite on a subset of $%
\Lambda ^{N}$ of positive $d^{N}x$ measure, but $u_{n}$ has no limit at
every point of a subset of $\Lambda $ of positive $dx$ measure.

\section{Kernel recovery and measurability\label{measurability}}

As pointed out in the Introduction, there are rather simple ways to recover $%
u$ from $G_{m,N}(u),$ but the resulting formulas are inadequate to answer
even the most basic measure-theoretic questions. Below, we describe a
recovery procedure that preserves all the relevant measure-theoretic
information.

The fact that the kernel of a generalized $N$-mean of order $m$ is unique
(Corollary \ref{cor4}) means that $G_{m,N}$ has an inverse $K_{m,N}$
(``kernel operator''), i.e., $K_{m,N}(U)=u.$ By the linearity of $G_{m,N},$
it follows that $K_{m,N}$ (defined on the space of generalized $N$-means of
order $m$) is also linear.

Explicit formulas for $K_{1,N}$ and $K_{m,m}$ will be given but, when $m\geq
2$ and $N\geq m+1,$ $K_{m,N}$ will only be defined inductively, in terms of $%
K_{m-1,N}$ and $K_{m,N-1}$ (and also $G_{m-1,N-1}$ and $G_{N-1,N}$). This
makes it possible, by transfinite induction (as in the proof of Theorem \ref
{th3}) to recover $K_{m,N}$ for arbitrary $m$ and $N\geq m.$

First, $G_{m,m}=K_{m,m}=I$ is obvious. To define $K_{1,N},$ we return to the
proof of Lemma \ref{lm2} in the case when the sequences $u_{n}$ and $U_{n}$
are constant and therefore equal to their a.e. limits $u$ and $U,$
respectively. If so, (\ref{6}) yields the kernel of $U:$ 
\begin{multline}
K_{1,N}(U)=  \label{15} \\
NU(\cdot ,\widetilde{x}_{2},...,\widetilde{x}
_{N})+U(x_{1},...,x_{N})-NG_{1,N}(U(\cdot ,\widetilde{x}_{2},...,\widetilde{x%
}_{N}))(x_{1},...,x_{N}),
\end{multline}
where $\widetilde{x}_{2},...,\widetilde{x}_{N}$ is arbitrarily chosen in
some suitable co-null subset of $\Lambda ^{N-1}$ and $(x_{1},...,x_{N})$ is
arbitrarily chosen in some suitable co-null subset of $\Lambda ^{N}.$ (Of
course, these co-null subsets depend on $U.$)

Note that (\ref{15}) immediately shows that $K_{1,N}(U)$ is measurable if $U$
is measurable since it is always possible to choose $\widetilde{x}_{2},...,%
\widetilde{x}_{N}$ such that $U(\cdot ,\widetilde{x}_{2},...,\widetilde{x}
_{N})$ is measurable (the extra term $U(x_{1},...,x_{N})-NG_{1,N}(U(\cdot ,%
\widetilde{x}_{2},...,\widetilde{x}_{N}))(x_{1},...,x_{N})$ in (\ref{15}) is
just a constant).

To define $K_{m,N}$ in terms of $K_{m-1,N}$ and $K_{m,N-1}$ when $m\geq 2$
and $N\geq m+1,$ we return to the proof of Theorem \ref{th3} when the
sequences $u_{n}$ and $U_{n}$ are constant and therefore equal to their a.e.
limits $u$ and $U,$ respectively. If so, $u_{n}=u$ and (\ref{13}) is the
a.e. equality 
\begin{equation*}
G_{m-1,N}(u(\cdot ,\widetilde{x}_{N}))=m^{-1}\left( NG_{N-1,N}(U(\cdot ,%
\widetilde{x}_{N}))-(N-m)U\right) ,\text{ }
\end{equation*}
where $\widetilde{x}_{N}$ is arbitrarily chosen in some suitable co-null
subset of $\Lambda .$ This shows that $NG_{N-1,N}(U(\cdot ,\widetilde{x}
_{N}))-(N-m)U$ is a generalized $N$-mean of order $m-1$ and that 
\begin{equation}
u(\cdot ,\widetilde{x}_{N})=m^{-1}K_{m-1,N}\left( NG_{N-1,N}(U(\cdot ,%
\widetilde{x}_{N}))-(N-m)U\right) .  \label{16}
\end{equation}
Next, (\ref{14}) is the a.e. equality (since $v=u(\cdot ,\widetilde{x}_{N})$
in (\ref{14})) 
\begin{equation*}
G_{m,N-1}(u)=\frac{N}{N-m}U(\cdot ,\widetilde{x}_{N})-\frac{m}{N-m}%
G_{m-1,N-1}(u(\cdot ,\widetilde{x}_{N}))\text{.}
\end{equation*}
This shows that $NU(\cdot ,\widetilde{x}_{N})-mG_{m-1,N-1}(u(\cdot ,%
\widetilde{x}_{N}))$ is a generalized $(N-1)$-mean of order $m$ and, by (\ref
{16}), that $K_{m,N}(U)=u$ is given by (when $2\leq m\leq N-1$) 
\begin{multline}
(N-m)K_{m,N}(U)=  \label{17} \\
K_{m,N-1}\left( NU(\cdot ,\widetilde{x}_{N})-G_{m-1,N-1}\left(
K_{m-1,N}\left( NG_{N-1,N}(U(\cdot ,\widetilde{x}_{N}))-(N-m)U\right)
\right) \right) .
\end{multline}

If an a.e. defined function on $\Lambda ^{k}$ is said to be symmetric if it
coincides a.e. with an everywhere defined symmetric function, then a
generalized $N$-mean is symmetric if and only if its kernel is symmetric.
Indeed, it is obvious that the operators $G_{m,N}$ preserve symmetry.
Conversely, since $K_{m,m}=I$ and $K_{1,N}$ (obviously) preserve symmetry, $%
K_{m,N}$ preserves symmetry by (\ref{17}) and transfinite induction.

\begin{theorem}
\label{th5}A generalized $N$-mean $U$ of order $m$ is measurable if and only
if its kernel $u$ is measurable.
\end{theorem}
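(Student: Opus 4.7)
The plan is to prove both implications by the transfinite induction machinery that the paper has just set up, using the explicit inverse formulas (\ref{15}) and (\ref{17}).

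\textbf{Sufficiency.} This direction is immediate: if $u$ is measurable on $\Lambda^{m}$, then each summand $u(x_{i_{1}},\ldots,x_{i_{m}})$ in (\ref{1}) is the composition of $u$ with a coordinate projection $\Lambda^{N}\to\Lambda^{m}$, which is measurable (for the completed product measure, since the projection is measurable and $u$ is Borel up to null sets—this is where $\sigma$-finiteness and completeness are invoked). Hence $U=G_{m,N}(u)$ is measurable as a finite linear combination of such compositions.

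\textbf{Necessity.} I would proceed by transfinite induction on the pairs $(m,N)$ with $m\leq N$, ordered lexicographically as in the proof of Theorem \ref{th3}. The base case $(m,m)$ is trivial because $K_{m,m}=I$, so $u=U$. For the base case $m=1$, $N\geq 2$, I would use formula (\ref{15}). Fubini's theorem guarantees that for almost every $(\tilde{x}_{2},\ldots,\tilde{x}_{N})\in\Lambda^{N-1}$ the section $U(\cdot,\tilde{x}_{2},\ldots,\tilde{x}_{N})$ is measurable on $\Lambda$; I would then pick such a tuple inside the co-null set required by (\ref{15}) (the intersection of two co-null sets is co-null, hence nonempty). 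With this choice, the term $NU(\cdot,\tilde{x}_{2},\ldots,\tilde{x}_{N})$ is a measurable function of $x$, and the remaining piece $U(x_{1},\ldots,x_{N})-NG_{1,N}(U(\cdot,\tilde{x}_{2},\ldots,\tilde{x}_{N}))(x_{1},\ldots,x_{N})$ is a real constant (a single value of $U$ evaluated at a fixed point). Hence $K_{1,N}(U)$ is measurable.

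For the inductive step $m\geq 2$, $N\geq m+1$, I would invoke the recovery identity (\ref{17}). By the induction hypothesis on $(m-1,N)$ and $(m,N-1)$, the operators $K_{m-1,N}$ and $K_{m,N-1}$ preserve measurability; the operators $G_{m-1,N-1}$ and $G_{N-1,N}$ preserve measurability by the sufficiency direction already established. So it suffices to ensure that $U(\cdot,\tilde{x}_{N})$ is a measurable function on $\Lambda^{N-1}$, which again holds for a.e. $\tilde{x}_{N}\in\Lambda$ by Fubini; choosing such an $\tilde{x}_{N}$ inside the co-null subset of $\Lambda$ specified by (\ref{17}) yields a measurable input to each operator on the right-hand side of (\ref{17}), and measurability propagates through the nested applications. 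Thus $K_{m,N}(U)=u$ is measurable.

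\textbf{Main obstacle.} The only nontrivial point is the compatibility between the ``suitable co-null subsets'' dictated by formulas (\ref{15}) and (\ref{17}) and the ``a.e. measurability of sections'' co-null subsets supplied by Fubini. This is purely a set-theoretic observation: both subsets are co-null in a common $\sigma$-finite complete space, so their intersection is co-null and thus nonempty, allowing a simultaneous choice of parameters that makes every section appearing in the formulas measurable. Once this is noted, the induction runs without further difficulty, and the theorem follows.
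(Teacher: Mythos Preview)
Your proposal is correct and follows essentially the same route as the paper: the sufficiency is declared obvious, and the necessity is handled by transfinite induction on $(m,N)$ using the recovery formulas (\ref{15}) and (\ref{17}), with the key step being the choice of $\widetilde{x}_{N}$ (or $(\widetilde{x}_{2},\ldots,\widetilde{x}_{N})$) in the intersection of the co-null set where the formula is valid and the co-null set where the relevant section of $U$ is measurable. Your explicit remark on this ``compatibility of co-null sets'' is exactly the point the paper uses implicitly when it writes ``choose $\widetilde{x}_{N}$ so that $U(\cdot,\widetilde{x}_{N})$ is measurable.''
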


\begin{proof}
It is obvious that the measurability of $u$ implies the measurability of $U.$
From now on, we assume that $U$ is measurable and prove that $u$ is
measurable.

This is trivial when $m=N$ and was already noted after (\ref{15}) when $m=1.$
If $m\geq 2$ and $N\geq m+1,$ we proceed by transfinite induction, thereby
assuming that the theorem is true if $m$ is replaced with $m-1$ and, with $m$
being held fixed, if $N$ is replaced with $N-1.$

In (\ref{17}), choose $\widetilde{x}_{N}$ so that $U(\cdot ,\widetilde{x}
_{N})$ is measurable. This is possible since $\widetilde{x}_{N}$ is
arbitrary in a co-null subset of $\Lambda .$ In particular, $%
G_{N-1,N}(U(\cdot ,\widetilde{x}_{N}))$ is measurable (recall that the
measurability of the kernel always implies the measurability of the
generalized mean) and so $NG_{N-1,N}(U(\cdot ,\widetilde{x}_{N}))-(N-m)U$ is
measurable. Since this is a generalized $N$-mean of order $m-1$ (see the
proof of (\ref{17})), the hypothesis of induction on $m$ ensures that its
kernel is measurable. It follows that $NU(\cdot ,\widetilde{x}
_{N})-G_{m-1,N-1}\left( K_{m-1,N}\left( NG_{N-1,N}(U(\cdot ,\widetilde{x}
_{N}))-(N-m)U\right) \right) $ is measurable. Since this is a generalized $%
(N-1)$-mean of order $m$ with kernel $(N-m)K_{m,N}(U)=(N-m)u$ (see once
again the proof of (\ref{17})), it follows from the hypothesis of induction
on $N$ that $(N-m)u$ is measurable. Thus, $u$ is measurable.
\end{proof}

To complement the measurability discussion, we investigate essential
boundedness. In the next theorem, $||\cdot ||_{\Lambda ^{k},\infty }$
denotes the $L^{\infty }$ norm on $\Lambda ^{k}.$

\begin{theorem}
\label{th6}(i) If $u$ is a measurable function on $\Lambda ,$ its
generalized $N$-mean $U:=G_{1,N}(u)$ is essentially bounded above (below) if
and only if $u$ is bounded above (below). Furthermore, $\limfunc{ess}\sup U=%
\limfunc{ess}\sup u$ and $\limfunc{ess}\inf U=\limfunc{ess}\inf u.$ In
particular, $u\in L^{\infty }(\Lambda ;dx)$ if and only if $U\in L^{\infty
}(\Lambda ^{N};d^{N}x)$ and, if so, $||U||_{\Lambda ^{N},\infty
}=||u||_{\Lambda ,\infty }.$\newline
(ii) Let $1\leq m\leq N$ be integers. If $u\in L^{\infty }(\Lambda
^{m};d^{m}x),$ then $G_{m,N}(u)\in L^{\infty }(\Lambda ^{N};d^{N}x)$ and $%
||G_{m,N}(u)||_{\Lambda ^{N},\infty }\leq ||u||_{\Lambda ^{m},\infty }.$
Conversely, if $U\in L^{\infty }(\Lambda ^{N};d^{N}x)$ is a generalized $N$
-mean of order $m,$ then $K_{m,N}(U)\in L^{\infty }(\Lambda ^{m};d^{m}x)$
and there is a constant $C(m,N)$ independent of $U$ such that $%
||K_{m,N}(U)||_{\Lambda ^{m},\infty }\leq C(m,N)||U||_{\Lambda ^{N},\infty
}. $
\end{theorem}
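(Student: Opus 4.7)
For part (i), the ``if'' direction is immediate from averaging: if $u \leq M$ a.e.\ on $\Lambda$, Lemma~\ref{lm1} shows that the set on which $u(x_i) \leq M$ for every $i = 1,\ldots,N$ is co-null in $\Lambda^N$, so on the intersection with the co-null set where (\ref{1}) holds pointwise we have $U \leq M$. This yields $\operatorname{ess\,sup} U \leq \operatorname{ess\,sup} u$. For the converse, suppose $U \leq M$ a.e.\ and, for contradiction, that $A := \{u > M\}$ has positive $dx$ measure. Then $A^N$ has positive $d^Nx$ measure, and intersecting with a co-null set on which $U = N^{-1}\sum_i u(x_i)$ pointwise yields a positive-measure set on which $U > M$, contradicting $U \leq M$ a.e. The statement for $\operatorname{ess\,inf}$ follows by applying the same reasoning to $-u$, and the $L^\infty$ norm identity is then immediate.

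For the direct inequality in part (ii), the definition (\ref{1}) expresses $G_{m,N}(u)$ as a convex combination of translates of $u$, so $|G_{m,N}(u)| \leq ||u||_{\Lambda^m,\infty}$ a.e. The converse bound $||K_{m,N}(U)||_{\Lambda^m,\infty} \leq C(m,N)\,||U||_{\Lambda^N,\infty}$ I would prove by transfinite induction on the pairs $(m,N)$ with $m \leq N$, mimicking the scheme of Theorems~\ref{th3} and~\ref{th5} but now tracking $L^\infty$ norms. The base case $m = N$ is trivial since $K_{m,m} = I$, giving $C(m,m) = 1$, and the base case $m = 1$ follows from part (i), which yields the sharp constant $C(1,N) = 1$.

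For the inductive step ($m \geq 2$, $N \geq m+1$) I would feed the recovery formula (\ref{17}). By Fubini, for a.e.\ $\tilde{x}_N \in \Lambda$ the section $U(\cdot,\tilde{x}_N)$ lies in $L^\infty(\Lambda^{N-1})$ with norm at most $||U||_{\Lambda^N,\infty}$, and Theorem~\ref{th5} further permits a choice of $\tilde{x}_N$ for which this section is simultaneously measurable. Fix such a $\tilde{x}_N$. The quantity $NG_{N-1,N}(U(\cdot,\tilde{x}_N)) - (N-m)U$---identified in the derivation of (\ref{17}) as a generalized $N$-mean of order $m-1$---has $L^\infty$ norm at most $(2N-m)||U||_{\Lambda^N,\infty}$ by the direct inequality and the triangle inequality, so the inductive hypothesis on $m$ bounds its kernel by $C(m-1,N)(2N-m)||U||_{\Lambda^N,\infty}$. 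Feeding this through $G_{m-1,N-1}$ preserves the bound, and thus the full argument of $K_{m,N-1}$ in (\ref{17}) has $L^\infty$ norm at most $[N + C(m-1,N)(2N-m)]\,||U||_{\Lambda^N,\infty}$. Invoking the inductive hypothesis on $N$ and dividing by $N-m$ produces the desired finite constant $C(m,N)$. The main obstacle is bookkeeping: verifying at each step that the intermediate expression is genuinely the kernel or the generalized mean of the appropriate order so that the inductive hypotheses and the direct inequality really apply to it---exactly the content of the identifications already made when (\ref{17}) was derived.
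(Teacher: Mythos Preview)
Your proposal is correct and follows essentially the same route as the paper: the same product-set argument for part (i), and the same transfinite induction through the recovery formula (\ref{17}) for part (ii), arriving at the identical recursion $C(m,N)=\dfrac{C(m,N-1)}{N-m}\bigl(N+(2N-m)C(m-1,N)\bigr)$.
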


\begin{proof}
(i) If $a\in \Bbb{R}$ and $u\geq a$ on a subset $E$ of $\Lambda $ of
positive $dx$ measure, then $U\geq a$ on $E^{N},$ of positive $d^{N}x$
measure. This shows that $\limfunc{ess}\sup U\geq \limfunc{ess}\sup u.$ On
the other hand, it is obvious that $U\leq \limfunc{ess}\sup u$ a.e. on $%
\Lambda ^{N},$ so that $\limfunc{ess}\sup U\leq \limfunc{ess}\sup u.$ This
shows that $\limfunc{ess}\sup U=\limfunc{ess}\sup u.$ Likewise, $\limfunc{ess%
}\inf U=\limfunc{ess}\inf u.$

(ii) The first part (when $u\in L^{\infty }(\Lambda ^{m};d^{m}x)$) is
straightforward. Suppose now that $U\in L^{\infty }(\Lambda ^{N};d^{N}x)$ is
a generalized $N$-mean of order $m.$ If $m=1,$ the result with $C(1,N)=1$
follows from (i) and it is trivial when $m=N$ (with $C(m,m)=1$). To prove
the existence of $C(m,N)$ in general, we proceed once again by transfinite
induction. The hypothesis of induction is that $m\geq 2,N\geq m+1$ and that
the constants $C(m-1,N)$ and $C(m,N-1)$ exist.

In (\ref{17}), choose $\widetilde{x}_{N}$ such that $U(\cdot ,\widetilde{x}
_{N})$ is measurable and that $||U(\cdot ,\widetilde{x}_{N})||_{\Lambda
^{N-1},\infty }\leq ||U||_{\Lambda ^{N},\infty }$ (which holds for a.e. $%
\widetilde{x}_{N}\in \Lambda $). Then, $||G_{N-1,N}(U(\cdot ,\widetilde{x}
_{N})||_{\Lambda ^{N},\infty }\leq ||U||_{\Lambda ^{N},\infty }$ and it
routinely follows from (\ref{17}) that $C(m,N)$ may be defined by 
\begin{equation*}
C(m,N):=\frac{C(m,N-1)}{N-m}(N+(2N-m)C(m-1,N)).
\end{equation*}
\end{proof}

\begin{remark}
In the vector-valued case, part (i) of Theorem \ref{th6} does not make sense
as stated, but $||U||_{\infty ,\Lambda ^{N}}\leq ||u||_{\infty ,\Lambda
}\leq (2N+1)||U||_{\infty ,\Lambda ^{N}}$ follows at once from (\ref{1}) and
(\ref{15}).
\end{remark}

Clearly, the result in part (i) that $G_{1,N}(u)$ is essentially bounded
above (below) if the same thing is true of $u$ remains true for $G_{m,N}(u),$
but ``only if'' is false if $m>1,$ as shown in the following example.

\begin{example}
\label{ex2}Suppose that $\Lambda $ is the countably infinite union of
pairwise disjoint subsets with positive $dx$ measure\footnote{%
This is always true if $dx$ ($\sigma $-finite) is non atomic.}. With no loss
of generality, call these subsets $\Lambda _{i}$ with $i\in \Bbb{Z}$ and
define $u$ on $\Lambda ^{2}$ by setting $u=|i|+|j|$ on $\Lambda _{i}\times
\Lambda _{j}$ if $i\neq -j$ and $u=-|j|$ if $i=-j.$ Then, $u$ is measurable,
symmetric and not essentially bounded below. Yet, $U:=G_{2,3}(u)\geq 0$.
Indeed, if $U(x_{1},x_{2},x_{3})<0,$ at least one of the pairs $%
(x_{1},x_{2}),(x_{2},x_{3})$ or $(x_{1},x_{3})$ is in $\Lambda _{-j}\times
\Lambda _{j}$ for some $j\in \Bbb{Z}.$ With no loss of generality, suppose
that $(x_{1},x_{3})\in \Lambda _{-j}\times \Lambda _{j},$ so that $%
u(x_{1},x_{3})=-|j|.$ If $x_{2}\in \Lambda _{k}$ with $k\neq \pm j,$ then $%
u(x_{1},x_{2})=u(x_{2,}x_{3})=|j|+|k|,$ so that $%
U(x_{1},x_{2},x_{3})=|j|+2|k|>0,$ which is a contradiction. Thus, $k=j$ or $%
k=-j.$ If $k=j,$ then $u(x_{1},x_{2})=-|j|$ and $u(x_{2,}x_{3})=2|j|,$ so
that $U(x_{1},x_{2},x_{3})=0,$ another contradiction. Likewise, $%
U(x_{1},x_{2},x_{3})=0$ if $k=-j.$ Therefore, $U(x_{1},x_{2},x_{3})<0$ does
not happen.
\end{example}

Since $L^{\infty }(\Lambda ^{m};d^{m}x)$ is a Banach space, the next
corollary follows at once from Theorem \ref{th6} (ii).

\begin{corollary}
\label{cor7}Let $1\leq m\leq N$ be integers. Then $G_{m,N}$ is a linear
isomorphism of $L^{\infty }(\Lambda ^{m};d^{m}x)$ onto the subspace of $%
L^{\infty }(\Lambda ^{N};d^{N}x)$ of generalized $N$-means of order $m,$
with inverse $K_{m,N}.$ In particular, the space of measurable bounded
generalized $N$-means of order $m$ is a closed subspace of $L^{\infty
}(\Lambda ^{N};d^{N}x).$
\end{corollary}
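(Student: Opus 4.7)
The plan is to unpack what ``follows at once'' actually means and to make each component explicit. Linearity of $G_{m,N}$ is already noted in the introduction, and Theorem~\ref{th6}(ii) supplies two norm bounds: $\|G_{m,N}(u)\|_{\Lambda^N,\infty} \le \|u\|_{\Lambda^m,\infty}$ for $u\in L^\infty(\Lambda^m;d^m x)$, and $\|K_{m,N}(U)\|_{\Lambda^m,\infty}\le C(m,N)\|U\|_{\Lambda^N,\infty}$ for every generalized $N$-mean $U$ of order $m$ lying in $L^\infty(\Lambda^N;d^N x)$. The first bound shows that $G_{m,N}$ is a bounded linear map into $L^\infty(\Lambda^N;d^N x)$; injectivity is given by Corollary~\ref{cor4} (unique recovery of the kernel); surjectivity onto the stated image is tautological. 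Hence $G_{m,N}$ is a continuous linear bijection onto the subspace of generalized $N$-means of order $m$ in $L^\infty(\Lambda^N;d^N x)$, with inverse $K_{m,N}$ which is bounded by the second inequality. This already gives the isomorphism statement.

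For the closedness of the image, I would argue directly by completeness. Let $U_n$ be a Cauchy sequence of measurable bounded generalized $N$-means of order $m$ in $L^\infty(\Lambda^N;d^N x)$, and set $u_n := K_{m,N}(U_n) \in L^\infty(\Lambda^m;d^m x)$. By the bound on $K_{m,N}$ and linearity,
\begin{equation*}
\|u_n - u_p\|_{\Lambda^m,\infty} \le C(m,N)\,\|U_n - U_p\|_{\Lambda^N,\infty},
\end{equation*}
so $u_n$ is Cauchy in $L^\infty(\Lambda^m;d^m x)$, which is complete; let $u$ be its limit. By the bound on $G_{m,N}$,
\begin{equation*}
\|G_{m,N}(u_n) - G_{m,N}(u)\|_{\Lambda^N,\infty} \le \|u_n - u\|_{\Lambda^m,\infty} \longrightarrow 0,
\end{equation*}
so $U_n = G_{m,N}(u_n) \to G_{m,N}(u)$ in $L^\infty(\Lambda^N;d^N x)$. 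Thus the $L^\infty$-limit of $U_n$ coincides a.e.\ with $G_{m,N}(u)$, which is a generalized $N$-mean of order $m$ (with bounded measurable kernel $u$), proving that the image is closed.

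The only mild subtlety is bookkeeping about ``a.e.\ defined'' versus ``everywhere defined'' representatives, but Remark~\ref{rm1} ensures that modifying $u$ on a null set only modifies $G_{m,N}(u)$ on a null set, so the constructions pass cleanly to the $L^\infty$ equivalence classes. No further obstacle is anticipated, since the two-sided norm control in Theorem~\ref{th6}(ii) is precisely the Banach-space statement that $G_{m,N}$ and $K_{m,N}$ are continuous inverses of one another.
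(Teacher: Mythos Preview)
Your argument is correct and is precisely the unpacking of the paper's one-line justification (``Since $L^{\infty}(\Lambda^{m};d^{m}x)$ is a Banach space, the next corollary follows at once from Theorem~\ref{th6}(ii)''): the two-sided $L^\infty$ bounds of Theorem~\ref{th6}(ii) make $G_{m,N}$ a bicontinuous bijection, and completeness of $L^\infty(\Lambda^m;d^mx)$ then yields closedness of the range. One small wording point: surjectivity onto the stated image is not quite ``tautological''---it requires knowing that a bounded generalized $N$-mean has a \emph{bounded} kernel, which is exactly the second half of Theorem~\ref{th6}(ii) you already invoked---but the logic is sound.
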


As a by-product of Corollary \ref{cor7}, the standard properties of
convergence (strong, weak, weak*) for a sequence of measurable bounded
generalized $N$-means of order $m$ are equivalent to the corresponding
properties for the sequence of kernels.

When all the functions are everywhere finite and (\ref{1}) is always assumed
to hold pointwise, not just a.e., the formulas (\ref{15}) and (\ref{17})
hold pointwise and for every\emph{\ }choice of $(\widetilde{x}_{2},...,%
\widetilde{x}_{N})\in \Lambda ^{N-1}$ and $(x_{1},...,x_{N})\in \Lambda ^{N}$
in (\ref{15}), every choice of $\widetilde{x}_{N}\in \Lambda $ in (\ref{17}
). This is readily seen by revisiting the arguments justifying these
formulas. In particular, no measure on $\Lambda $ is needed for their
validity. On the other hand, if $\Lambda $ is a topological space and $dx$
is a Borel measure, the formulas show, once again by induction, that the
Borel measurability of $u$ and $U:=K_{m,N}(u)$ are equivalent (recall that
every section of a Borel measurable function is Borel measurable). This is
not true if (\ref{1}) does not hold pointwise or if the functions are not
everywhere defined and finite, because modification on a null set, even a
Borel one, need not preserve the Borel measurability of functions.

\section{Integrability\label{integrability}}

In the previous sections, we saw that the kernel $u$ of a generalized mean $%
U $ is uniquely determined by $U$ (up to modifications on a null set) and
that the measurability and essential boundedness of $U$ and $u$ are
equivalent. It is a natural question whether some integrability properties
are also transferred. However, it immediately appears that generalized $N$
-means cannot have good integrability properties with respect to $d^{N}x$
when $\Lambda $ has infinite measure, regardless of whether the kernel is
integrable. Rather than confining attention to the case when $\Lambda $ has
finite measure, we shall discuss integrability relative to $Pd^{N}x$ where $%
P $ is a probability density on $\Lambda ^{N},$ i.e., $P\in L^{1}(\Lambda
^{N};d^{N}x),P\geq 0$ and $\int_{\Lambda ^{N}}Pd^{N}x=1.$ The last condition
is convenient but inessential and can always be achieved by scaling.

On the other hand, it will be important to assume that $P>0$ (a.e.) and that 
$P$ is symmetric. That $P>0$ ensures that the measures $d^{N}x$ and $Pd^{N}x$
are absolutely continuous with respect to one another and, hence, that the
null sets are the same for both measures. The symmetry of $P$ is convenient
to formulate the problem of interest. In general, given $1\leq k<N,$ choose
indices $1\leq i_{1}<\cdots <i_{k}\leq N$ and consider the function $%
P_{i_{1},...,i_{k}}(x_{i_{1}},...,x_{i_{k}})$ obtained by integrating $P$
with respect to all the variables $x_{j}$ with $j\notin \{i_{1},...,i_{k}\}.$
By Fubini's theorem, $P_{i_{1},...,i_{k}}$ is a (marginal) probability
density on $\Lambda ^{k}.$ The main point is that if $P$ is symmetric, $%
P_{i_{1},...,i_{k}}(y_{1},...,y_{k})$ is independent of the choice of the
indices $i_{1},...,i_{k}.$ It will be called the $k$\emph{-variable reduction%
} $P_{(k)}$ of $P.$ Note that $P_{(k)}>0$ a.e. on $\Lambda ^{k},$ that $%
P_{(k)}$ is symmetric and that $P_{(\ell )}$ is the $\ell $-variable
reduction of $P_{(k)}$ when $1\leq \ell <k.$

We observe in passing that the symmetry of $P$ is sufficient, but not
necessary, for the existence of $P_{(k)},1\leq k\leq N-1.$ Nothing more is
needed later but, for expository purposes, we shall continue to assume that $%
P$ is symmetric.

The definition of generalized means shows at once that if $u\in
L^{1}(\Lambda ^{m};P_{(m)}d^{m}x),$ then $U:=G_{m,N}(u)\in L^{1}(\Lambda
^{N};Pd^{N}x)$ and $||U||_{1,Pd^{N}x}\leq ||u||_{1,P_{(m)}d^{m}x}.$
Furthermore, $\int_{\Lambda ^{N}}UPd^{N}x=\int_{\Lambda ^{m}}uP_{(m)}d^{m}x.$
Surprisingly, it is generally \emph{false} that $U\in L^{1}(\Lambda
^{N};Pd^{N}x)$ implies $u\in L^{1}(\Lambda ^{m};P_{(m)}d^{m}x),$ as shown in
Example \ref{ex3} below. In particular, contrary to intuition, the relation $%
\int_{\Lambda ^{N}}UPd^{N}x=\int_{\Lambda ^{m}}uP_{(m)}d^{m}x$ need not make
sense even though the left-hand side is defined. This is easy to overlook in
formal calculations.

More generally, if $1\leq r<\infty $ and $u\in L^{r}(\Lambda
^{m};P_{(m)}d^{m}x),$ then\footnote{%
Since $L^{\infty }(\Lambda ^{m};P_{(m)}d^{m}x)=L^{\infty }(\Lambda
^{m};d^{m}x)$ and $L^{\infty }(\Lambda ^{N};Pd^{N}x)=L^{\infty }(\Lambda
^{N};d^{N}x),$ this is also true when $r=\infty ;$ see Theorem \ref{th6}
(ii).} $G_{m,N}(u)\in L^{r}(\Lambda ^{N};Pd^{N}x)$ and $%
||G_{m,N}(u)||_{r,Pd^{N}x}\leq ||u||_{r,P_{(m)}d^{m}x},$ but $G_{m,N}(u)\in
L^{r}(\Lambda ^{N};Pd^{N}x)$ need not imply $u\in L^{r}(\Lambda
^{m};P_{(m)}d^{m}x).$

\begin{remark}
From the above, $\{G_{m,N}(u):u\in L^{r}(\Lambda
^{m};P_{(m)}d^{m}x)\}\subset \{G_{m,N}(u)\in L^{r}(\Lambda ^{N};Pd^{N}x)\}$
and equality need not hold, but we do not know whether the former space is
always dense in the latter. A notable difficulty is that truncation does not
preserve the generalized mean structure.
\end{remark}

\begin{example}
\label{ex3}With $\Lambda =(0,\infty ),$ let $dx$ denote the Lebesgue
measure. For $i,j\in \Bbb{N},$ set $I_{i}:=(i-1,i]$ and $Q_{ij}:=I_{i}\times %
I_{j}.$ We define $P$ on $(0,\infty )^{2}$ by 
\begin{equation}
P=p_{ij}\text{ on }Q_{ij},\text{ where }p_{ij}:=\left\{ 
\begin{array}{l}
(i+j)^{-2}\text{ if }|i-j|=1, \\ 
(i+j)^{-4}\text{ if }|i-j|\neq 1.
\end{array}
\right.   \label{18}
\end{equation}
Then, $P$ is symmetric and $P>0.$ The integrability of $P$ follows from $%
\sum_{i,j=1}^{\infty }p_{ij}=\sum_{|i-j|=1}p_{ij}+\sum_{|i-j|\neq 1}p_{ij}.$
The first sum is equal to $2\sum_{i=1}^{\infty }(2i+1)^{-2}<\infty $ and the
second one is majorized by $\sum_{i,j=1}^{\infty }(i+j)^{-4}=\sum_{k=2}^{%
\infty }\sum_{i+j=k}(i+j)^{-4}=\sum_{k=2}^{\infty }(k-1)k^{-4}<\infty .$
That $\int_{(0,\infty )^{2}}Pd^{2}x\neq 1$ is of course inconsequential. 
\newline
Next, let $u$ be given by 
\begin{equation}
u=2(-1)^{i}i\text{ on }I_{i}.  \label{19}
\end{equation}
Set $U=G_{1,2}(u).$ By (\ref{19}), $U=(-1)^{i}i+(-1)^{j}j$ on $Q_{ij}$ and $%
|(-1)^{i}i+(-1)^{j}j|=1$ when $|i-j|=1,$ whereas $|(-1)^{i}i+(-1)^{j}j|\leq %
i+j$ in all cases. The integrability of $|U|P,$ i.e., $U\in L^{1}((0,\infty %
)^{2};Pd^{2}x),$ follows from $\sum_{i,j=1}^{\infty %
}|(-1)^{i}i+(-1)^{j}j|p_{ij}=\sum_{|i-j|=1}p_{ij}+\sum_{|i-j|\neq %
1}|(-1)^{i}i+(-1)^{j}j|p_{ij}.$ As before, the first sum is $2\sum_{i=1}^{%
\infty }(2i+1)^{-2}<\infty $ and the second one is majorized by $%
\sum_{i,j=1}^{\infty }(i+j)^{-3}=\sum_{k=2}^{\infty }\sum_{i+j=k}(i+j)^{-3}=%
\sum_{k=1}^{\infty }(k-1)k^{-3}<\infty .$ \newline
Now, $P_{(1)}=\lambda _{i}$ on $I_{i},$ where $\lambda _{i}:=\sum_{j=1}^{%
\infty }p_{ij}.$ Assume $i\geq 2.$ By (\ref{18}), $\lambda
_{i}=(2i-1)^{-2}+(2i+1)^{-2}+\sum_{j\geq 1,|j-i|\neq 1}(i+j)^{-4}>(2i+1)^{-2}
$ and so, by (\ref{19}), $|u|P_{(1)}=2i\lambda _{i}>2i(2i+1)^{-2}$ on $I_{i}.
$ Since $\sum_{i=2}^{\infty }i(2i+1)^{-2}=\infty ,$ it follows that $u\notin
L^{1}((0,\infty );P_{(1)}dx).$
\end{example}

In the remainder of this section, we show that, in spite of Example \ref{ex3}
, a suitable extra condition on $P$ ensures that a generalized $N$-mean $U$
of order $m$ is in $L^{r}(\Lambda ^{N};Pd^{N}x)$ with $1\leq r<\infty $ if
and only if its kernel $K_{m,N}(U)$ is in $L^{r}(\Lambda
^{m};P_{(m)}d^{m}x). $ This condition is independent of the measure $dx$ and
of $m$ and $N.$ Its generality is addressed in the last two theorems.

Once again, we begin with the case $m=1.$

\begin{lemma}
\label{lm8} Suppose $N\geq 2$ and that for a.e. $(x_{2},...,x_{N})$ in some
measurable subset $A\subset \Lambda ^{N-1}$ of positive $d^{N-1}x$ measure,
there is a constant $\Gamma (x_{2},...,x_{N})>0$ such that 
\begin{equation}
P(\cdot ,x_{2},...,x_{N})\geq \Gamma (x_{2},...,x_{N})P_{(1)}\text{ a.e. on }%
\Lambda .  \label{20}
\end{equation}
Then, there is a constant $\alpha >0$ such that, for every $r\in [1,\infty )$
and every $U\in L^{r}(\Lambda ^{N};Pd^{N}x),$ the set of those $%
(x_{2},...,x_{N})\in A$ such that $U(\cdot ,x_{2},...,x_{N})\in
L^{r}(\Lambda ;P_{(1)}dx)$ and $||U(\cdot ,x_{2},...,x_{N})||_{r,P_{(1)}dx}%
\leq \alpha ^{1/r}||U||_{r,Pd^{N}x}$ is not a null set\footnote{%
We need not debate whether this set is measurable.} of $\Lambda ^{N-1}.$
\end{lemma}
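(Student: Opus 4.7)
The plan is to use Fubini's theorem to relate $||U||_{r,Pd^Nx}$ to a pointwise-in-$(x_2,\ldots,x_N)$ quantity, apply the hypothesis (\ref{20}) to replace $P$ by $P_{(1)}$ at the cost of a factor $\Gamma^{-1}$, and then apply Chebyshev's inequality on a subset of $A$ where $\Gamma$ is uniformly bounded below by a constant independent of $U$ and $r$. The only slightly delicate point in the execution is the extraction of such a subset; the rest is a routine Fubini-Chebyshev estimate.

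Specifically, I would set $f(x_2,\ldots,x_N) := \int_\Lambda |U(x_1,\ldots,x_N)|^r P(x_1,\ldots,x_N)\, dx_1$ and $g(x_2,\ldots,x_N) := ||U(\cdot,x_2,\ldots,x_N)||_{r,P_{(1)}dx}^r$. Fubini gives $\int f\, d^{N-1}x = ||U||_{r,Pd^Nx}^r < \infty$, while (\ref{20}) yields $g \leq f/\Gamma$ a.e.\ on $A$. Because $\Gamma > 0$ a.e.\ on $A$, $|A| > 0$, and $\Lambda^{N-1}$ is $\sigma$-finite, the nested sets $A \cap \{\Gamma \geq 1/n\}$, intersected with a $\sigma$-finite exhaustion, produce (by continuity of measure) a measurable $A_0 \subset A$ with $0 < |A_0| < \infty$ and a constant $\gamma > 0$ such that $\Gamma \geq \gamma$ on $A_0$. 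Crucially, $\gamma$ and $|A_0|$ depend only on $P$ and $A$, not on $U$ or $r$. Integrating $g \leq f/\gamma$ over $A_0$ then gives $\int_{A_0} g \leq \gamma^{-1}||U||_{r,Pd^Nx}^r$.

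Setting $\alpha := 2/(\gamma |A_0|)$, Markov's inequality yields $|\{(x_2,\ldots,x_N) \in A_0 : g(x_2,\ldots,x_N) > \alpha ||U||_{r,Pd^Nx}^r\}| \leq |A_0|/2$ whenever $||U||_{r,Pd^Nx} > 0$; the degenerate case $||U||_{r,Pd^Nx} = 0$ forces $U = 0$ a.e.\ (since $P > 0$) and hence $g = 0$ a.e., so the conclusion is immediate. Either way, a subset of $A_0$ of measure at least $|A_0|/2 > 0$ satisfies $||U(\cdot, x_2, \ldots, x_N)||_{r, P_{(1)}dx} \leq \alpha^{1/r} ||U||_{r, Pd^Nx}$ (so in particular $U(\cdot, x_2, \ldots, x_N) \in L^r(\Lambda; P_{(1)}dx)$), and therefore the set described in the lemma contains a set of positive $d^{N-1}x$ measure and is not a null set of $\Lambda^{N-1}$.
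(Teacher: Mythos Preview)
Your proof is correct and follows essentially the same route as the paper's: extract a subset of $A$ of positive finite measure on which $\Gamma$ is uniformly bounded below, then combine Fubini with a Chebyshev/Markov estimate (the paper phrases the latter as a one-line contradiction, but the content is identical, and your $\alpha = 2/(\gamma|A_0|)$ differs from the paper's $\alpha = 1/(\varepsilon|A_\varepsilon|)$ only by an irrelevant constant). The one point you glossed over is that $\Gamma$ is not assumed measurable, so the sets $A\cap\{\Gamma\geq 1/n\}$ need not be measurable as written; the paper fixes this by first replacing $\Gamma$ with the measurable function $(x_2,\dots,x_N)\mapsto \limfunc{ess}\inf_{x_1\in\Lambda} P(x_1,\dots,x_N)P_{(1)}(x_1)^{-1}\geq \Gamma$, after which your construction of $A_0$ goes through unchanged.
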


\begin{proof}
The condition (\ref{20}) is equivalent to $\limfunc{ess}\inf_{x_{1}\in
\Lambda }P(x_{1},...,x_{N})P_{(1)}(x_{1})^{-1}>0$ for a.e. $%
(x_{2},...,x_{N})\in A.$ The function $P(x_{1},...,x_{N})P_{(1)}(x_{1})^{-1}$
is a.e. finite on $\Lambda ^{N}$ (because $P>0$ is in $L^{1}(\Lambda
^{N};d^{N}x)$ and $P_{(1)}>0$) and measurable. Thus, $\limfunc{ess}%
\inf_{x_{1}\in \Lambda }P(x_{1},\cdot )P_{(1)}(x_{1})^{-1}$ is a.e. finite
and measurable on $\Lambda ^{N-1}$ (see e.g. \cite[p. 271 and p. 275]{Jo93}%
). As a result, upon replacing $\Gamma $ with $\limfunc{ess}\inf_{x_{1}\in
\Lambda }P(x_{1},\cdot )P_{(1)}(x_{1})^{-1}\geq \Gamma ,$ it is not
restrictive to assume that $\Gamma $ is measurable on $A.$ If so, the set $%
A_{\varepsilon }:=\{(x_{2},...,x_{N})\in A:\Gamma
(x_{2},...,x_{N})>\varepsilon \}$ has positive $d^{N-1}x$ measure if $%
\varepsilon >0$ is small enough and, by (\ref{20}), we infer that for a.e. $%
(x_{2},...,x_{N})\in A_{\varepsilon },$%
\begin{equation}
P(\cdot ,x_{2},...,x_{N})\geq \varepsilon P_{(1)}\text{ a.e. on }\Lambda .
\label{21}
\end{equation}
Furthermore, the $d^{N-1}x$ measure $|A_{\varepsilon }|$ of $A_{\varepsilon }
$ is finite. Indeed, by integrating (\ref{21}) on $\Lambda \times %
A_{\varepsilon },$ we get $1\geq \int_{\Lambda \times A_{\varepsilon
}}Pd^{N}x\geq \varepsilon |A_{\varepsilon }|,$ so that $|A_{\varepsilon }|%
\leq \varepsilon ^{-1}.$ We claim that the theorem holds with $\alpha
:=|A_{\varepsilon }|^{-1}\varepsilon ^{-1}\geq 1.$

Let $U\in L^{r}(\Lambda ^{N};Pd^{N}x)$ be given, where $r\in [1,\infty ).$
Since $P>0,$ $U$ is measurable. Hence, $U(\cdot ,x_{2},...,x_{N})$ and $%
|U(\cdot ,x_{2},...,x_{N})|^{r}P_{(1)}$ are measurable on $\Lambda $ for
a.e. $(x_{2},...,x_{N})\in \Lambda ^{N-1}.$ By Fubini's theorem, $|U(\cdot
,x_{2},...,x_{N})|^{r}P(\cdot ,x_{2},...,x_{N})\in L^{1}(\Lambda ;dx)$ for
a.e. $(x_{2},...,x_{N})\in \Lambda ^{N-1}$ and so, by (\ref{21}), 
\begin{equation*}
\int_{\Lambda }|U(x,x_{2},...,x_{N})|^{r}P_{(1)}(x)dx\leq \varepsilon
^{-1}\int_{\Lambda }|U(x,x_{2},...,x_{N})|^{r}P(x,x_{2},...,x_{N})dx<\infty ,
\end{equation*}
for a.e. $(x_{2},...,x_{N})\in A_{\varepsilon }.$ This shows that $U(\cdot
,x_{2},...,x_{N})\in L^{r}(\Lambda ;P_{(1)}dx)$ for a.e. $%
(x_{2},...,x_{N})\in A_{\varepsilon }.$

By contradiction, suppose now that the inequality $||U(\cdot
,x_{2},...,x_{N})||_{r,P_{(1)}dx}\leq $\linebreak $|A_{\varepsilon
}|^{-1/r}\varepsilon ^{-1/r}||U||_{r,Pd^{N}x}$ holds only for $%
(x_{2},...,x_{N})$ in a null set of $A_{\varepsilon },$ i.e., 
\begin{equation*}
|A_{\varepsilon }|^{-1}||U||_{r,Pd^{N}x}^{r}<\varepsilon \int_{\Lambda
}|U(x,x_{2},...,x_{N})|^{r}P_{1}(x)dx,
\end{equation*}
for a.e. $(x_{2},...,x_{N})\in A_{\varepsilon }.$ Together with (\ref{21}),
this yields 
\begin{equation*}
|A_{\varepsilon }|^{-1}||U||_{r,Pd^{N}x}^{r}<\int_{\Lambda
}|U(x,x_{2},...,x_{N})|^{r}P(x,x_{2},...,x_{N})dx,
\end{equation*}
for a.e. $(x_{2},...,x_{N})\in A_{\varepsilon }.$ Then, upon integrating on $%
A_{\varepsilon },$ we find $||U||_{r,Pd^{N}x}^{r}<\int_{\Lambda \times
A_{\varepsilon }}|U|^{r}Pd^{N}x\leq ||U||_{r,Pd^{N}x}^{r},$ which is absurd.
\end{proof}

We now solve the integrability question when $m=1.$

\begin{lemma}
\label{lm9} If $N\geq 2,$ suppose that for a.e. $(x_{2},...,x_{N})$ in some
subset $A\subset \Lambda ^{N-1}$ of positive $d^{N-1}x$ measure, there is a
constant $\Gamma (x_{2},...,x_{N})>0$ such that 
\begin{equation}
P(\cdot ,x_{2},...,x_{N})\geq \Gamma (x_{2},...,x_{N})P_{(1)}\text{ a.e. on }%
\Lambda .  \label{22}
\end{equation}
Assume $1\leq r<\infty $ and that $U=G_{1,N}(u)$ for some a.e. finite
function $u$ on $\Lambda .$ Then, $U\in L^{r}(\Lambda ^{N};Pd^{N}x)$ if and
only if $u\in L^{r}(\Lambda ;P_{(1)}dx).$ If so, $||U||_{r,Pd^{N}x}\leq $ $%
||u||_{r,P_{(1)}dx}\leq C_{r}(N,P)||U||_{r,Pd^{N}x}$ where $C_{r}(N,P)>0$ is
a constant independent of $U$ and of $u.$
\end{lemma}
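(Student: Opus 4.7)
The plan is to prove both directions separately. The first inequality $||U||_{r,Pd^{N}x}\le||u||_{r,P_{(1)}dx}$, and hence the sufficiency, was already recorded in the discussion preceding Example~\ref{ex3}: it follows from $U=\frac{1}{N}\sum_{i=1}^{N}u(x_{i})$ and Minkowski's inequality, so nothing remains to be said there.

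For necessity together with the upper bound, assume $U\in L^{r}(\Lambda^{N};Pd^{N}x)$. I would apply Lemma~\ref{lm8} to select $(\widetilde{x}_{2},\ldots,\widetilde{x}_{N})\in A$ such that $||U(\cdot,\widetilde{x}_{2},\ldots,\widetilde{x}_{N})||_{r,P_{(1)}dx}\le\alpha^{1/r}||U||_{r,Pd^{N}x}$, and arrange simultaneously that each $u(\widetilde{x}_{j})$ is finite (this costs only a null set, since $u$ is a.e.\ finite). Working with pointwise representatives so that (\ref{1}) holds everywhere (Remark~\ref{rm1}), rearranging $NU(x,\widetilde{x}_{2},\ldots,\widetilde{x}_{N})=u(x)+\sum_{j=2}^{N}u(\widetilde{x}_{j})$ gives
\[
u(x)=N\,U(x,\widetilde{x}_{2},\ldots,\widetilde{x}_{N})-c,\qquad c:=\sum_{j=2}^{N}u(\widetilde{x}_{j})\in\Bbb{R}.
\]
Since $P_{(1)}dx$ is a probability measure, constants lie in every $L^{r}$, so this representation already yields $u\in L^{r}(\Lambda;P_{(1)}dx)$.

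It remains to bound $|c|$ by $||U||_{r,Pd^{N}x}$. Now that $u\in L^{r}\subset L^{1}$ with respect to $P_{(1)}dx$, Fubini applied to $U=G_{1,N}(u)$ gives $\int_{\Lambda}u\,P_{(1)}dx=\int_{\Lambda^{N}}U\,P\,d^{N}x$. Integrating the recovery formula above against $P_{(1)}dx$ and using this identity to eliminate $\int u\,P_{(1)}dx$ yields
\[
c=N\int_{\Lambda}U(\cdot,\widetilde{x}_{2},\ldots,\widetilde{x}_{N})P_{(1)}dx-\int_{\Lambda^{N}}U\,P\,d^{N}x,
\]
and H\"older's inequality (both measures being probabilities) bounds the right-hand side by $(N\alpha^{1/r}+1)||U||_{r,Pd^{N}x}$. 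The triangle inequality then gives $||u||_{r,P_{(1)}dx}\le(2N\alpha^{1/r}+1)||U||_{r,Pd^{N}x}$, so $C_{r}(N,P):=2N\alpha^{1/r}+1$ works. The main subtlety is that $c=\sum u(\widetilde{x}_{j})$ involves pointwise values of $u$, which admit no direct a priori control by $||U||$; the trick is to use the recovery formula in two stages, first qualitatively to deduce $u\in L^{r}$, then quantitatively by re-expressing $c$ via Fubini so that every quantity entering it is controlled by $||U||_{r,Pd^{N}x}$.
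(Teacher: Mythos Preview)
Your proof is correct and follows the same overall strategy as the paper: invoke Lemma~\ref{lm8} to pick a section $(\widetilde{x}_{2},\ldots,\widetilde{x}_{N})$ with $\|U(\cdot,\widetilde{x}_{2},\ldots,\widetilde{x}_{N})\|_{r,P_{(1)}dx}\le\alpha^{1/r}\|U\|_{r,Pd^{N}x}$, then use the recovery identity $u(x)=NU(x,\widetilde{x}_{2},\ldots,\widetilde{x}_{N})+\text{const}$ and bound the constant. The one tactical difference lies in how the constant is controlled. The paper keeps the constant expressed purely in terms of $U$ via formula~(\ref{15}), namely $\widetilde{c}=U(x_{1},\ldots,x_{N})-NG_{1,N}(U(\cdot,\widetilde{x}_{2},\ldots,\widetilde{x}_{N}))(x_{1},\ldots,x_{N})$, and bounds each summand by its $L^{r}(\Lambda^{N};Pd^{N}x)$ norm (constants on a probability space equal their $L^{r}$ norm), computing $\|NG_{1,N}(U(\cdot,\widetilde{x}_{2},\ldots,\widetilde{x}_{N}))\|_{r,Pd^{N}x}\le N\alpha^{1/r}\|U\|_{r,Pd^{N}x}$ directly from the symmetry of $P$. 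Your route instead bootstraps: first read off $u\in L^{r}$ qualitatively, then integrate the recovery formula against $P_{(1)}dx$ and use $\int u\,P_{(1)}dx=\int U\,P\,d^{N}x$ to rewrite $c$ as a difference of integrals bounded by H\"older. Both reach exactly the same bound $|c|\le(N\alpha^{1/r}+1)\|U\|_{r,Pd^{N}x}$ and the same final constant $C_{r}(N,P)=2N\alpha^{1/r}+1$. Your argument is slightly slicker in that it avoids introducing $G_{1,N}(U(\cdot,\widetilde{x}_{2},\ldots,\widetilde{x}_{N}))$ as a function on $\Lambda^{N}$; the paper's version has the minor advantage of never needing the intermediate qualitative step, since its expression for $\widetilde{c}$ involves only $U$ from the start.
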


\begin{proof}
Since the sufficiency was already justified before Example \ref{ex3}, we
only address the necessity and, with no loss of generality, assume $N\geq 2.$
Suppose $U:=G_{1,N}(u)\in L^{r}(\Lambda ^{N};Pd^{N}x).$ Once again, since $%
P>0,$ $U$ is measurable and so, by Theorem \ref{th5}, $u$ is measurable.

Now, recall the formula (\ref{15}) for $u=K_{1,N}(U),$ in which $(\widetilde{%
x}_{2},...,\widetilde{x}_{N})$ is arbitrarily chosen in a co-null set $%
\widetilde{S}_{N-1}$ of $\Lambda ^{N-1}$ and $(x_{1},...,x_{N})$ is
arbitrarily chosen in some co-null set of $\Lambda ^{N}.$ For convenience,
rewrite this formula as 
\begin{equation}
u(x)=NU(x,\widetilde{x}_{2},...,\widetilde{x}_{N})+\widetilde{c},  \label{23}
\end{equation}
where $\widetilde{c}:=U(x_{1},...,x_{N})-NG_{1,N}(U(\cdot ,\widetilde{x}%
_{2},...,\widetilde{x}_{N}))(x_{1},...,x_{N})$ is a constant (i.e.,
independent of $x$). By (\ref{23}), $\widetilde{c}$ is uniquely determined
by $u$ and by $(\widetilde{x}_{2},...,\widetilde{x}_{N}),$ which shows that $%
\widetilde{c}$ is also independent of $(x_{1},...,x_{N}).$ Since $P$ is a
probability density on $\Lambda ^{N},$ nonnegative constant functions on $%
\Lambda ^{N}$ are equal to their $L^{r}(\Lambda ^{N};Pd^{N}x)$ norm for
every $r.$ It follows that ($NG_{1,N}(U(\cdot ,\widetilde{x}_{2},...,%
\widetilde{x}_{N}))\in L^{r}(\Lambda ^{N};Pd^{N}x)$ and) 
\begin{equation}
|\widetilde{c}|\leq ||U||_{r,Pd^{N}x}+||NG_{1,N}(U(\cdot ,\widetilde{x}
_{2},...,\widetilde{x}_{N}))||_{r,Pd^{N}x}.  \label{24}
\end{equation}

We now evaluate $||NG_{1,N}(U(\cdot ,\widetilde{x}_{2},...,\widetilde{x}
_{N}))||_{r,Pd^{N}x}$ after making a suitable choice of $(\widetilde{x}
_{2},...,\widetilde{x}_{N}).$ By Lemma \ref{lm8} and the completeness of $%
d^{N-1}x,$ there is $\alpha >0$ (independent of $U$ and $r$) such that the
subset of those points $(x_{1},...,x_{N})\in A$ such that $U(\cdot
,x_{2},...,x_{N})\in L^{r}(\Lambda ;P_{(1)}dx)$ and $||U(\cdot
,x_{2},...,x_{N})||_{r,P_{(1)}dx}\leq \alpha ^{1/r}||U||_{r,Pd^{N}x}$ is not
contained in the null set $\Lambda ^{N-1}\backslash \widetilde{S}_{N-1}.$
Hence, there is $(\widetilde{x}_{2},...,\widetilde{x}_{N})\in \widetilde{S}
_{N-1}$ such that $U(\cdot ,\widetilde{x}_{2},...,\widetilde{x}_{N})\in
L^{r}(\Lambda ;P_{(1)}dx)$ and that 
\begin{equation}
||U(\cdot ,\widetilde{x}_{2},...,\widetilde{x}_{N})||_{r,P_{(1)}dx}\leq
\alpha ^{1/r}||U||_{r,Pd^{N}x}.  \label{25}
\end{equation}

By definition of $G_{1,N},$%
\begin{equation}
NG_{1,N}(U(\cdot ,\widetilde{x}_{2},...,\widetilde{x}
_{N}))(x_{1},...,x_{N})=\sum_{j=1}^{N}U(x_{j},\widetilde{x}_{2},...,%
\widetilde{x}_{N}).  \label{26}
\end{equation}
We claim that, when $j$ is fixed, $U(x_{j},\widetilde{x}_{2},...,\widetilde{x%
}_{N})$ is in $L^{r}(\Lambda ^{N};Pd^{N}x)$ when viewed as a function of the 
$N$ variables $x_{1},...,x_{N}.$ Indeed, by the symmetry of $P$ and (\ref{25}%
), 
\begin{multline*}
\int_{\Lambda ^{N}}|U(x_{j},\widetilde{x}_{2},...,\widetilde{x}%
_{N})|^{r}P(x_{1},...,x_{N})dx_{1}\cdots dx_{N}= \\
\int_{\Lambda }|U(x_{j},\widetilde{x}_{2},...,\widetilde{x}
_{N})|^{r}P_{(1)}(x_{j})dx\leq \alpha ||U||_{r,Pd^{N}x}^{r}.
\end{multline*}
Thus, by (\ref{26}), $NG_{1,N}(U(\cdot ,\widetilde{x}_{2},...,\widetilde{x}%
_{N}))$ is the sum of $N$ functions of $L^{r}(\Lambda ^{N};Pd^{N}x)$ with
norms bounded by $\alpha ^{1/r}||U||_{r,Pd^{N}x}.$ Hence, $%
||NG_{1,N}(U(\cdot ,\widetilde{x}_{2},...,\widetilde{x}_{N}))||_{r,Pd^{N}x}%
\leq N\alpha ^{1/r}||U||_{r,Pd^{N}x}$ and so, $|\widetilde{c}|\leq
(1+N\alpha ^{1/r})||U||_{r,Pd^{N}x}$ by (\ref{24}). Then, by (\ref{23}) and (%
\ref{25}), $u\in L^{r}(\Lambda ;P_{(1)}dx)$ and $||u||_{r,P_{(1)}dx}\leq
C_{r}(N,P)||U||_{r,Pd^{N}x}$ where $C_{r}(N,P):=2N\alpha ^{1/r}+1.$
\end{proof}

When $m\geq 2,$ we first need a lemma similar to Lemma \ref{lm8}.

\begin{lemma}
\label{lm10} Suppose $N\geq 2$ and that for a.e. $x_{N}$ in some subset $%
B\subset \Lambda $ of positive $dx$ measure, there is a constant $\gamma
(x_{N})>0$ such that $P(\cdot ,x_{N})\geq \gamma (x_{N})P_{(N-1)}$ a.e. on $%
\Lambda ^{N-1}.$ Then, there is a constant $\beta >0$ such that, given $U\in
L^{r}(\Lambda ^{N};Pd^{N}x)$ with $r\in [1,\infty ),$ the set of those $%
x_{N}\in B$ such that $U(\cdot ,x_{N})\in L^{r}(\Lambda
^{N-1};P_{(N-1)}d^{N-1}x)$ and $||U(\cdot ,x_{N})||_{r,P_{(N-1)}d^{N-1}x}%
\leq \beta ^{1/r}||U||_{r,Pd^{N}x}$ is not a null set of $\Lambda .$
\end{lemma}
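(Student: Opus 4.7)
The plan is to follow the proof of Lemma \ref{lm8} with the roles of the single ``running'' variable and the block of $N-1$ ``fixed'' variables exchanged. First, I would replace $\gamma (x_{N})$ with $\limfunc{ess}\inf_{(x_{1},\ldots ,x_{N-1})\in \Lambda ^{N-1}}P(x_{1},\ldots ,x_{N-1},\cdot )P_{(N-1)}(x_{1},\ldots ,x_{N-1})^{-1}$, which is a.e.\ finite and measurable on $\Lambda $ by the same results from \cite{Jo93} invoked in Lemma \ref{lm8}; this reduces the problem to the case where $\gamma $ is measurable on $B$. Then, choosing $\varepsilon >0$ small enough that $B_{\varepsilon }:=\{x_{N}\in B:\gamma (x_{N})>\varepsilon \}$ has positive $dx$ measure gives $P(\cdot ,x_{N})\geq \varepsilon P_{(N-1)}$ a.e.\ on $\Lambda ^{N-1}$ for a.e.\ $x_{N}\in B_{\varepsilon }$.

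Next, I would check that $|B_{\varepsilon }|$ is finite by integrating this inequality over $\Lambda ^{N-1}\times B_{\varepsilon }$: since $P_{(N-1)}$ is a probability density on $\Lambda ^{N-1}$ and $\int_{\Lambda ^{N}}Pd^{N}x=1$, this yields $1\geq \varepsilon |B_{\varepsilon }|$, so $|B_{\varepsilon }|\leq \varepsilon ^{-1}$. I then claim the lemma holds with $\beta :=|B_{\varepsilon }|^{-1}\varepsilon ^{-1}\geq 1$.

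Given $U\in L^{r}(\Lambda ^{N};Pd^{N}x)$, Fubini's theorem shows that $|U(\cdot ,x_{N})|^{r}P(\cdot ,x_{N})\in L^{1}(\Lambda ^{N-1};d^{N-1}x)$ for a.e.\ $x_{N}$; combined with $P(\cdot ,x_{N})\geq \varepsilon P_{(N-1)}$ this yields $U(\cdot ,x_{N})\in L^{r}(\Lambda ^{N-1};P_{(N-1)}d^{N-1}x)$ for a.e.\ $x_{N}\in B_{\varepsilon }$, together with $\int_{\Lambda ^{N-1}}|U(\cdot ,x_{N})|^{r}P_{(N-1)}d^{N-1}x\leq \varepsilon ^{-1}\int_{\Lambda ^{N-1}}|U(\cdot ,x_{N})|^{r}P(\cdot ,x_{N})d^{N-1}x$. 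To get the quantitative bound, I would argue by contradiction exactly as in Lemma \ref{lm8}: if $||U(\cdot ,x_{N})||_{r,P_{(N-1)}d^{N-1}x}^{r}>\beta ||U||_{r,Pd^{N}x}^{r}$ held for a.e.\ $x_{N}\in B_{\varepsilon }$, multiplying by $\varepsilon $, using $\varepsilon P_{(N-1)}\leq P(\cdot ,x_{N})$, and then integrating over $B_{\varepsilon }$ (noting $\varepsilon \beta |B_{\varepsilon }|=1$) would yield $||U||_{r,Pd^{N}x}^{r}<||U||_{r,Pd^{N}x}^{r}$, which is absurd.

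There is no genuine obstacle; the argument is a line-by-line transposition of Lemma \ref{lm8}. The only checks that genuinely require care are the measurability/essential-infimum reduction (to confirm that the standard results apply with the variable $x_{N}$ isolated rather than $x_{1}$) and the Fubini/absolute-continuity trick used to pass from $P$ to $P_{(N-1)}$, both of which go through verbatim with $P_{(1)}$ replaced by $P_{(N-1)}$ and the $1$- and $(N-1)$-dimensional integrations swapped.
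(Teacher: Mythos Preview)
Your proposal is correct and is precisely the ``obvious modification'' of the proof of Lemma~\ref{lm8} that the paper itself invokes as its entire proof of Lemma~\ref{lm10}. You have simply written out in detail what the paper leaves implicit, swapping the roles of the single variable and the $(N-1)$-block throughout; there is no difference in approach.
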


\begin{proof}
Modify the proof of Lemma \ref{lm8} in the obvious way.
\end{proof}

In Lemma \ref{lm10}, $P(x_{1},...,x_{N})\geq \gamma
(x_{N})P_{(N-1)}(x_{1},...,x_{N-1})$ holds on a co-null subset of $\Lambda
^{N-1}\times B,$ but this is not immediately obvious (a union of sections of 
$d^{N-1}x$ measure $0$ has $d^{N}x$ measure $0$ \emph{only} if it is $d^{N}x$
measurable, which is not generally true):

\begin{remark}
\label{rm2}The condition $P(\cdot ,x_{N})\geq \gamma (x_{N})P_{(N-1)}$ a.e.
on $\Lambda ^{N-1}$ for a.e. $x_{N}\in B$ implies the same condition with $%
\gamma $ replaced with $\widetilde{\gamma }\geq \gamma $ and $\widetilde{%
\gamma }$ measurable on $B$ (see the proof of Lemma \ref{lm8}). Since $%
P-P_{(N-1)}\otimes \widetilde{\gamma }$ is measurable on $\Lambda ^{N-1}%
\times B,$ the subset of $\Lambda ^{N-1}\times B$ where $P\geq %
P_{(N-1)}\otimes \widetilde{\gamma }$ is measurable and, by a
straightforward contradiction argument, it is co-null in $\Lambda ^{N-1}%
\times B.$ Since $\widetilde{\gamma }\geq \gamma ,$ it follows that $%
P(x_{1},...,x_{N})\geq \gamma (x_{N})P_{(N-1)}(x_{1},...,x_{N-1})$ a.e. on $%
\Lambda ^{N-1}\times B.$
\end{remark}

The next lemma is elementary, but crucial to the induction procedure.

\begin{lemma}
\label{lm11}Suppose $N\geq 3$ and that for a.e. $x_{N}$ in some subset $%
B\subset \Lambda $ of positive $dx$ measure, there is a constant $\gamma
(x_{N})>0$ such that $P(\cdot ,x_{N})\geq \gamma (x_{N})P_{(N-1)}$ a.e. on $%
\Lambda ^{N-1}.$ Then, for a.e. $x_{N-1}\in B,$ we have $P_{(N-1)}(\cdot %
,x_{N-1})\geq \gamma (x_{N-1})P_{(N-2)}$ a.e. on $\Lambda ^{N-2}.$
\end{lemma}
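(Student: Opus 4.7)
The plan is to integrate the pointwise inequality $P(\cdot,x_N)\geq \gamma(x_N)P_{(N-1)}$ over one of the middle variables, use the symmetry of $P$ to identify both sides of the integrated inequality as marginal reductions of $P$, and then re-section via Fubini and relabel.

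More concretely, I would first invoke Remark~\ref{rm2} to upgrade the hypothesis to the joint statement
\begin{equation*}
P(x_1,\ldots,x_{N-1},x_N)\geq \gamma(x_N)\,P_{(N-1)}(x_1,\ldots,x_{N-1})\quad \text{a.e. on }\Lambda^{N-1}\times B.
\end{equation*}
(This uses that, after replacing $\gamma$ by a measurable majorant as in the proof of Lemma~\ref{lm8}, both sides are measurable on $\Lambda^{N-1}\times B$.) Then I would integrate this inequality with respect to $x_{N-1}$ over $\Lambda$; since both sides are nonnegative and measurable, Tonelli lets me do this sectionwise for a.e.\ $(x_1,\ldots,x_{N-2},x_N)\in \Lambda^{N-2}\times B$.

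On the right-hand side, $\int_{\Lambda}P_{(N-1)}(x_1,\ldots,x_{N-1})\,dx_{N-1}=P_{(N-2)}(x_1,\ldots,x_{N-2})$ by the defining property of the reductions. On the left-hand side, the symmetry of $P$ means that integrating out the $(N-1)$st coordinate yields the same marginal as integrating out any single coordinate, so
\begin{equation*}
\int_{\Lambda}P(x_1,\ldots,x_{N-1},x_N)\,dx_{N-1}=P_{(N-1)}(x_1,\ldots,x_{N-2},x_N).
\end{equation*}
Hence, for a.e.\ $(x_1,\ldots,x_{N-2},x_N)\in\Lambda^{N-2}\times B$,
\begin{equation*}
P_{(N-1)}(x_1,\ldots,x_{N-2},x_N)\geq \gamma(x_N)\,P_{(N-2)}(x_1,\ldots,x_{N-2}).
\end{equation*}

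Finally, I would re-section the co-null set in $\Lambda^{N-2}\times B$ via Fubini to conclude that for a.e.\ $x_N\in B$, the above inequality holds for a.e.\ $(x_1,\ldots,x_{N-2})\in \Lambda^{N-2}$. Using the symmetry of $P_{(N-1)}$ to move the $x_N$-argument to the first slot and then relabeling $x_N\rightsquigarrow x_{N-1}$ gives the desired conclusion $P_{(N-1)}(\cdot,x_{N-1})\geq \gamma(x_{N-1})P_{(N-2)}$ a.e.\ on $\Lambda^{N-2}$ for a.e.\ $x_{N-1}\in B$. The only delicate point is the bookkeeping of the a.e.\ quantifiers: the upgrade from the hypothesis (which is nested, ``a.e. $x_N$, then a.e. on $\Lambda^{N-1}$'') to a joint a.e.\ statement on $\Lambda^{N-1}\times B$ is not automatic, which is precisely why Remark~\ref{rm2} is invoked at the outset, and the final ``unnesting'' from the joint form back to a nested form uses Fubini on the complement of a measurable null set.
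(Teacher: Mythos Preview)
Your proposal is correct and follows essentially the same approach as the paper: integrate the hypothesis over one variable to pass from $P$ and $P_{(N-1)}$ to $P_{(N-1)}$ and $P_{(N-2)}$, then relabel. The only cosmetic differences are that the paper integrates out $x_1$ rather than $x_{N-1}$ (immaterial by symmetry) and leaves the a.e.\ bookkeeping implicit, whereas you spell it out via Remark~\ref{rm2} and Fubini; note also that your ``move the $x_N$-argument to the first slot'' step is unnecessary, since after integrating out $x_{N-1}$ the inequality already has $x_N$ in the last slot of $P_{(N-1)}$, so a straight relabel $x_N\rightsquigarrow x_{N-1}$ suffices.
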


\begin{proof}
The short version of the proof goes as follows: For every $x_{N}\in B$ such
that $P(\cdot ,x_{N})\geq \gamma (x_{N})P_{(N-1)},$ integrate this
inequality with respect to $x_{1}$ to get $P_{(N-1)}(x_{2},...,,x_{N})\geq
\gamma (x_{N})P_{(N-2)}(x_{2},...,x_{N-1})$ and change $x_{j}$ into $x_{j-1}$
for $2\leq j\leq N.$ It is easy to justify this procedure -perhaps for $%
x_{N} $ in a smaller co--null subset of $B$- based on Fubini's theorem and
the fact that a.e. section of a co-null subset is co-null. We skip the minor
details.
\end{proof}

\begin{theorem}
\label{th12}If $N\geq 2,$ suppose that for a.e. $x_{N}$ in some subset $%
B\subset \Lambda $ of positive $dx$ measure, there is a constant $\gamma
(x_{N})>0$ such that 
\begin{equation}
P(\cdot ,x_{N})\geq \gamma (x_{N})P_{(N-1)}\text{ a.e. on }\Lambda ^{N-1}.
\label{27}
\end{equation}
Let $1\leq m\leq N$ be integers and let $1\leq r<\infty .$ \newline
(i) If $u\in L^{r}(\Lambda ^{m};P_{(m)}d^{m}x),$ then $G_{m,N}(u)\in
L^{r}(\Lambda ^{N};Pd^{N}x)$ and $||G_{m,N}(u)||_{r,Pd^{N}x}\leq %
||u||_{r,P_{(m)}d^{m}x}.$ \newline
(ii) Conversely, if $U\in L^{r}(\Lambda ^{N};Pd^{N}x)$ is a generalized $N$
-mean of order $m,$ then $K_{m,N}(U)\in L^{r}(\Lambda ^{m};P_{(m)}d^{m}x)$
and there is a constant $C(m,N,P)$ independent of $U$ such that $%
||K_{m,N}(U)||_{r,P_{(m)}d^{m}x}\leq C(m,N,P)||U||_{r,Pd^{N}x}.$
\end{theorem}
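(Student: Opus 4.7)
The plan is to prove (i) directly from Jensen's inequality and Fubini's theorem, and to prove (ii) by transfinite induction on pairs $(m,N)$ with $m \leq N$, using the recovery formulas (\ref{15}) and (\ref{17}) together with Lemmas \ref{lm9}--\ref{lm11}. Part (i) follows from the convexity of $t \mapsto |t|^{r}$: Jensen's inequality gives $|G_{m,N}(u)(x_{1},\ldots,x_{N})|^{r} \leq \binom{N}{m}^{-1}\sum_{1\leq i_{1}<\cdots<i_{m}\leq N} |u(x_{i_{1}},\ldots,x_{i_{m}})|^{r}$, and integrating against $Pd^{N}x$ reduces each summand, by Fubini's theorem and the symmetry of $P$, to $\int_{\Lambda^{m}} |u|^{r} P_{(m)}d^{m}y$.

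For (ii), the case $m=N$ is immediate with $C(m,m,P)=1$ since $K_{m,m}=I$ and $P_{(m)}=P$. When $m=1$ and $N\geq 2$, I would iterate Lemma \ref{lm11} starting from (\ref{27}) to obtain, for a.e. $(x_{2},\ldots,x_{N}) \in B^{N-1}$,
\begin{equation*}
P(\cdot,x_{2},\ldots,x_{N}) \geq \gamma(x_{N})\gamma(x_{N-1})\cdots\gamma(x_{2})\,P_{(1)} \text{ a.e. on } \Lambda,
\end{equation*}
which is exactly the hypothesis of Lemma \ref{lm9} with $A = B^{N-1}$; Lemma \ref{lm9} then closes this base case.

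For the inductive step with $m\geq 2$ and $N\geq m+1$, assume the theorem holds for $(m-1,N)$ and for $(m,N-1)$. By Lemma \ref{lm11}, the hypothesis (\ref{27}) persists with $P$ replaced by $P_{(N-1)}$, and since $(P_{(N-1)})_{(k)} = P_{(k)}$, the $(m,N-1)$ hypothesis applies on $\Lambda^{N-1}$ with ambient measure $P_{(N-1)}$. Given $U\in L^{r}(\Lambda^{N};Pd^{N}x)$, invoke Lemma \ref{lm10} to choose $\widetilde{x}_{N}\in B$ with $||U(\cdot,\widetilde{x}_{N})||_{r,P_{(N-1)}d^{N-1}x} \leq \beta^{1/r}||U||_{r,Pd^{N}x}$. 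Applying (i) to $G_{N-1,N}(U(\cdot,\widetilde{x}_{N}))$, the expression $W := NG_{N-1,N}(U(\cdot,\widetilde{x}_{N})) - (N-m)U$ is a generalized $N$-mean of order $m-1$ lying in $L^{r}(\Lambda^{N};Pd^{N}x)$ with norm bounded by a constant multiple of $||U||_{r,Pd^{N}x}$. The $(m-1,N)$ hypothesis places $K_{m-1,N}(W)$ in $L^{r}(\Lambda^{m-1};P_{(m-1)}d^{m-1}x)$ with a comparable bound, whence a further application of (i) on $\Lambda^{N-1}$ with measure $P_{(N-1)}$ yields $G_{m-1,N-1}(K_{m-1,N}(W)) \in L^{r}(\Lambda^{N-1};P_{(N-1)}d^{N-1}x)$. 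Adding $NU(\cdot,\widetilde{x}_{N})$, the full argument of $K_{m,N-1}$ in (\ref{17}) is a generalized $(N-1)$-mean of order $m$ in $L^{r}(\Lambda^{N-1};P_{(N-1)}d^{N-1}x)$, so the $(m,N-1)$ hypothesis applied with $P_{(N-1)}$ in place of $P$ gives $(N-m)K_{m,N}(U)\in L^{r}(\Lambda^{m};P_{(m)}d^{m}x)$ with the asserted norm bound; dividing by $N-m>0$ completes the induction.

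The principal obstacle is bookkeeping: one must ensure that hypothesis (\ref{27}) propagates through every instance of the induction and that the ambient measures are correctly identified at each step (which is precisely what Lemma \ref{lm11} accomplishes), and that the successive norm bounds compose into a single constant $C(m,N,P)$ independent of $U$. The crucial role of (\ref{27}) appears through Lemma \ref{lm10}, which transfers global $L^{r}$-integrability of $U$ on $\Lambda^{N}$ down to integrability of the slice $U(\cdot,\widetilde{x}_{N})$ on $\Lambda^{N-1}$ for a well-chosen $\widetilde{x}_{N}$; Example \ref{ex3} shows that without such a hypothesis this slice control fails, so the entire recovery argument via (\ref{17}) would collapse.
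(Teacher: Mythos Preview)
Your proposal is correct and follows essentially the same route as the paper: part (i) is direct, and part (ii) proceeds by transfinite induction on $(m,N)$, handling $m=N$ trivially, $m=1$ via Lemma~\ref{lm9} after iterating Lemma~\ref{lm11} to obtain the hypothesis (\ref{22}) on $A=B^{N-1}$, and the general step via formula (\ref{17}) combined with Lemma~\ref{lm10} for the slice bound and Lemma~\ref{lm11} to propagate (\ref{27}) to $P_{(N-1)}$. The only minor omissions are that the $\widetilde{x}_{N}$ furnished by Lemma~\ref{lm10} must also be chosen in the co-null set where (\ref{17}) is valid (possible since a non-null set meets every co-null set), and that the a.e.\ chaining in the $m=1$ base case uses the measurability observation of Remark~\ref{rm2}; neither affects the argument.
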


\begin{proof}
(i) As was observed earlier, this is straightforward (and (\ref{27}) is not
needed).

(ii) Since $K_{m,m}=I,$ there is nothing to prove when $m=N$ (and $%
C_{r}(m,m)=1$). Accordingly, we henceforth assume $N\geq m+1.$ If $m=1,$ the
result follows from Lemma \ref{lm9}, with $C_{r}(1,N,P)=C_{r}(N,P)$ from
that lemma. Indeed, by inductive application of Lemma \ref{lm11} and Remark 
\ref{rm2} if $N\geq 3,$ or directly from (\ref{27}) if $N=2,$ the condition (%
\ref{22}) holds with $A=B^{N-1}$ and $\Gamma (x_{2},...,x_{N}):=\gamma
(x_{N})\cdots \gamma (x_{2}).$

From now on, $m\geq 2$ (and $N\geq m+1,$ so that $N\geq 3$). By transfinite
induction, we assume that the result is true if $m$ is replaced with $m-1$
and that it is also true if $N$ is replaced with $N-1.$ Since $N$ cannot be
replaced with $N-1$ without changing $P,$ a clarification is in order: What
is actually assumed is that if $N$ is replaced with $N-1$ and if $Q>0$ is a
symmetric probability density on $\Lambda ^{N-1}$ satisfying (\ref{27}) with 
$N$ replaced with $N-1,$ there is a constant $C_{r}(m,N-1,Q)>0$ independent
of the generalized $(N-1)$-mean $V\in L^{r}(\Lambda ^{N-1};Qd^{N-1}x)$ of
order $m,$ such that $K_{m,N-1}(V)\in L^{r}(\Lambda ^{m};Q_{(m)}d^{m}x)$ and
that $||K_{m,N}(V)||_{r,Q_{(m)}d^{m}x}\leq
C_{r}(m,N-1,Q)||V||_{r,Qd^{N-1}x}. $

In (\ref{17}), $\widetilde{x}_{N}$ is arbitrary in some co-null subset of $%
\Lambda $ and so, by Lemma \ref{lm10}, we may assume that $U(\cdot ,%
\widetilde{x}_{N})\in L^{r}(\Lambda ^{N-1};P_{(N-1)}d^{N-1}x)$ and that $%
||U(\cdot ,\widetilde{x}_{N})||_{r,P_{(N-1)}d^{N-1}x}\leq \beta
^{1/r}||U||_{r,Pd^{N}x},$ where $\beta >0$ is independent of $U.$ With such
a choice of $\widetilde{x}_{N},$ it follows that the generalized $N$-mean of
order $m-1$ (see the proof of (\ref{17})) $NG_{N-1,N}(U(\cdot ,\widetilde{x}
_{N}))-(N-m)U$ is in $L^{r}(\Lambda ^{N};Pd^{N}x)$ and, recalling (i), that 
\begin{multline}
||NG_{N-1,N}(U(\cdot ,\widetilde{x}_{N}))-(N-m)U||_{r,Pd^{N}x}\leq
\label{28} \\
\left( N\beta ^{1/r}+(N-m)\right) ||U||_{r,Pd^{N}x}.
\end{multline}

Thus, we can now use the hypothesis of induction on $m$ to infer
that\linebreak $K_{m-1,N}\left( NG_{N-1,N}(U(\cdot ,\widetilde{x}
_{N}))-(N-m)U\right) \in L^{r}(\Lambda ^{m-1};P_{(m-1)}d^{m-1}x)$ and that
the inequality $||K_{m-1,N}\left( NG_{N-1,N}(U(\cdot ,\widetilde{x}%
_{N}))-(N-m)U\right) ||_{r,P_{(m-1)}d^{m-1}x}\leq $\linebreak $%
C_{r}(m-1,N,P)||NG_{N-1,N}(U(\cdot ,\widetilde{x}_{N}))-(N-m)U||_{r,Pd^{N}x}$
holds. Then, by (i) and (\ref{28}), $V:=NU(\cdot ,\widetilde{x}
_{N})-G_{m-1,N-1}\left( K_{m-1,N}\left( NG_{N-1,N}(U(\cdot ,\widetilde{x}
_{N}))-(N-m)U\right) \right) \in L^{r}(\Lambda ^{N-1};P_{(N-1)}d^{N-1}x)$
and 
\begin{multline}
||V||_{r,P_{(N-1)}d^{N-1}x}\leq  \label{29} \\
\left( N\beta ^{1/r}+C_{r}(m-1,N,P)\left( N\beta ^{1/r}+(N-m)\right) \right)
||U||_{r,Pd^{N}x}.
\end{multline}

From the proof of (\ref{17}), $V$ above is a generalized $(N-1)$-mean of
order $m$ with kernel $K_{m,N-1}(V)=K_{m,N}(U).$ By Lemma \ref{lm11}, $%
Q=P_{(N-1)}$ satisfies (\ref{27}) and $Q_{(m)}=P_{(m)}.$ Thus, by the
hypothesis of induction on $N,$ it follows that $K_{m,N}(U)=K_{m,N-1}(V)\in
L^{r}(\Lambda ^{m};P_{(m)}d^{m}x)$ and that $%
||K_{m,N-1}(V)||_{r,P_{(m)}d^{m}x}\leq
C_{r}(m,N-1,P_{(N-1)})||V||_{r,P_{(N-1)}d^{N-1}x}$ where $%
C_{r}(m,N-1,P_{(N-1)})>0$ is a constant independent of $V.$ That $%
K_{m,N}(U)\in L^{r}(\Lambda ^{m};P_{(m)}d^{m}x)$ and $%
||K_{m,N}(U)||_{r,P_{(m)}d^{m}x}\leq C_{r}(m,N,P)||U||_{r,Pd^{N}x}$ thus
follows from (\ref{29}) with 
\begin{multline*}
C_{r}(m,N,P):= \\
\frac{C_{r}(m,N-1,P_{(N-1)})}{N-m}\left( N\beta ^{1/r}+C_{r}(m-1,N,P)\left(
N\beta ^{1/r}+(N-m)\right) \right) .
\end{multline*}

The following corollary is the obvious variant of Corollary \ref{cor7}.
\end{proof}

\begin{corollary}
\label{cor13}Let $1\leq m\leq N$ be integers and let $1\leq r<\infty .$ If $%
N\geq 2,$ suppose that for a.e. $x_{N}$ in some subset $B\subset \Lambda $
of positive $dx$ measure, there is a constant $\gamma (x_{N})>0$ such that $%
P(\cdot ,x_{N})\geq \gamma (x_{N})P_{(N-1)}$ a.e. on $\Lambda ^{N-1}.$ Then $%
G_{m,N}$ is a linear isomorphism of $L^{r}(\Lambda ^{m};P_{(m)}d^{m}x)$ onto
the subspace of $L^{r}(\Lambda ^{N};Pd^{N}x)$ of generalized $N$-means of
order $m,$ with inverse $K_{m,N}.$ In particular, the space of generalized $%
N $-means of order $m$ in $L^{r}(\Lambda ^{N};Pd^{N}x)$ is a closed subspace
of $L^{r}(\Lambda ^{N};Pd^{N}x).$
\end{corollary}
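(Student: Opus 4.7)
The plan is to mirror the proof of Corollary \ref{cor7} exactly, with Theorem \ref{th12} playing the role that Theorem \ref{th6}(ii) played in the $L^{\infty}$ setting. First I would observe that Theorem \ref{th12}(i) provides the bound $||G_{m,N}(u)||_{r,Pd^{N}x}\leq ||u||_{r,P_{(m)}d^{m}x}$, which says $G_{m,N}$ is a bounded linear operator from $L^{r}(\Lambda ^{m};P_{(m)}d^{m}x)$ into $L^{r}(\Lambda ^{N};Pd^{N}x)$ with operator norm at most $1$. Corollary \ref{cor4} (uniqueness of the kernel) gives injectivity. Theorem \ref{th12}(ii) then says the image lands in the subspace of generalized $N$-means of order $m$ and, conversely, every generalized $N$-mean of order $m$ lying in $L^{r}(\Lambda ^{N};Pd^{N}x)$ has its kernel $K_{m,N}(U)$ in $L^{r}(\Lambda ^{m};P_{(m)}d^{m}x)$ with $||K_{m,N}(U)||_{r,P_{(m)}d^{m}x}\leq C_{r}(m,N,P)||U||_{r,Pd^{N}x}$. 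These two norm estimates together state precisely that $G_{m,N}$ is a linear isomorphism onto the subspace of generalized $N$-means of order $m$ in $L^{r}(\Lambda ^{N};Pd^{N}x)$, with inverse $K_{m,N}$.

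For the closedness assertion, I would argue by a standard completeness argument. Suppose $U_{n}$ is a sequence of generalized $N$-means of order $m$ in $L^{r}(\Lambda ^{N};Pd^{N}x)$ converging in this space to some $U$. Set $u_{n}:=K_{m,N}(U_{n})\in L^{r}(\Lambda ^{m};P_{(m)}d^{m}x)$. By the linearity of $K_{m,N}$ and the bound from Theorem \ref{th12}(ii) applied to $U_{n}-U_{k}$ (which is itself a generalized $N$-mean of order $m$ in $L^{r}$),
\begin{equation*}
||u_{n}-u_{k}||_{r,P_{(m)}d^{m}x}\leq C_{r}(m,N,P)||U_{n}-U_{k}||_{r,Pd^{N}x},
\end{equation*}
so $(u_{n})$ is Cauchy in the complete space $L^{r}(\Lambda ^{m};P_{(m)}d^{m}x)$ and converges to some $u\in L^{r}(\Lambda ^{m};P_{(m)}d^{m}x)$. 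Theorem \ref{th12}(i) then yields $G_{m,N}(u_{n})\rightarrow G_{m,N}(u)$ in $L^{r}(\Lambda ^{N};Pd^{N}x)$, and by uniqueness of limits $U=G_{m,N}(u)$, which belongs to the subspace.

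I expect no real obstacle: once Theorem \ref{th12} is in hand, both parts reduce to the standard fact that a continuous linear bijection between Banach spaces with continuous inverse has closed image. The only minor point to record is that injectivity of $G_{m,N}$ on a.e. finite functions is exactly the content of Corollary \ref{cor4}, so no additional argument is required.
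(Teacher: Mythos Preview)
Your proposal is correct and follows exactly the approach the paper intends: the paper states only that Corollary~\ref{cor13} is ``the obvious variant of Corollary~\ref{cor7}'' (which itself was deduced in one line from Theorem~\ref{th6}(ii) and the completeness of $L^{\infty}$), and you have simply written out that obvious variant explicitly, replacing Theorem~\ref{th6}(ii) with Theorem~\ref{th12}. Your added detail on closedness is a routine Cauchy-sequence argument that the paper leaves implicit.
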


We may once again highlight the fact that, by Corollary \ref{cor13}, the $%
L^{r}$ convergence properties of sequences of generalized $N$-means of order 
$m$ are equivalent to the same properties for the sequences of kernels. A
related albeit different issue (preservation of a \textit{product} structure
under weak convergence in $L^{r}$ spaces when $m=1$) is discussed in 
\cite[Theorem A.3]{ChChLi84}; condition (\ref{27}) is not needed.

It is obvious that (\ref{27}) holds when $Pd^{N}x$ is a product measure,
i.e. $P=\rho ^{\otimes N}$ where $\rho >0$ is a probability density on $%
\Lambda $ (and then $\gamma =\rho $). More generally, every symmetric
probability density can be approximated in $L^{1}(\Lambda ^{N};d^{N}x)$ by
positive symmetric densities satisfying (\ref{27}):

\begin{theorem}
\label{th14} If $N\geq 2$ and $P$ is a symmetric probability density on $%
\Lambda ^{N},$ there is a sequence $P_{n}>0$ of symmetric probability
densities on $\Lambda ^{N}$ such that $P_{n}$ satisfies (\ref{27}) and $%
P_{n}\rightarrow P$ in $L^{1}(\Lambda ^{N};d^{N}x).$
\end{theorem}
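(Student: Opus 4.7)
The plan is to perturb $P$ by mixing it with a symmetric product density and truncating the $P$ part against a scaled version of that density. Since $dx$ is $\sigma$-finite and $\Lambda$ has positive measure, a strictly positive probability density $\rho>0$ on $\Lambda$ exists: exhaust $\Lambda$ by an increasing sequence of sets of finite positive measure and place suitable positive constants on each. Then $\rho^{\otimes N}$ is a strictly positive, symmetric probability density on $\Lambda^{N}.$

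Fix sequences $K_{n}\uparrow \infty$ and $\epsilon_{n}\downarrow 0,$ and set
\[
P_{n}:=Z_{n}^{-1}\left[ (P\wedge K_{n}\rho^{\otimes N})+\epsilon_{n}\rho^{\otimes N}\right],\quad Z_{n}:=\int_{\Lambda^{N}}(P\wedge K_{n}\rho^{\otimes N})\,d^{N}x+\epsilon_{n}.
\]
Each $P_{n}$ is symmetric (the pointwise minimum of two symmetric functions is symmetric, as is a sum of symmetric functions), strictly positive (thanks to the $\epsilon_{n}\rho^{\otimes N}$ term), and a probability density by normalization. For $L^{1}$ convergence, $\rho^{\otimes N}>0$ a.e. and $P<\infty$ a.e. yield $P\wedge K_{n}\rho^{\otimes N}\to P$ a.e.; combined with the majorant $P\wedge K_{n}\rho^{\otimes N}\leq P\in L^{1}(\Lambda^{N};d^{N}x),$ dominated convergence gives $Z_{n}\to 1$ and $P_{n}\to P$ in $L^{1}(\Lambda^{N};d^{N}x).$

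To verify (\ref{27}) for $P_{n},$ observe that $P_{n}\geq (\epsilon_{n}/Z_{n})\rho^{\otimes N}$ pointwise, while integrating the bound $P\wedge K_{n}\rho^{\otimes N}\leq K_{n}\rho^{\otimes N}$ with respect to $x_{N}$ (and using $\int_{\Lambda}\rho\,dx=1$) gives
\[
(P_{n})_{(N-1)}\leq \frac{K_{n}+\epsilon_{n}}{Z_{n}}\,\rho^{\otimes (N-1)}.
\]
Combining these two inequalities yields
\[
P_{n}(\cdot ,x_{N})\geq \frac{\epsilon_{n}\rho(x_{N})}{K_{n}+\epsilon_{n}}\,(P_{n})_{(N-1)}\quad \text{a.e. on }\Lambda^{N-1},
\]
so (\ref{27}) holds with $B=\Lambda$ and $\gamma_{n}(x_{N})=\epsilon_{n}\rho(x_{N})/(K_{n}+\epsilon_{n})>0$ for a.e. $x_{N}\in \Lambda.$

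The principal obstacle is conceptual rather than computational: a straight convex combination $(1-\epsilon)P+\epsilon \rho^{\otimes N}$ does \emph{not} satisfy (\ref{27}) in general, because its $(N-1)$-variable reduction still contains the uncontrolled term $(1-\epsilon)P_{(N-1)},$ which $P(\cdot ,x_{N})$ need not dominate. Truncating the $P$ contribution at $K_{n}\rho^{\otimes N}$ is precisely what supplies the required upper bound on $(P_{n})_{(N-1)},$ and letting $K_{n}\to \infty$ preserves $L^{1}$-proximity to $P.$
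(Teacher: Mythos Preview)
Your proof is correct and follows essentially the same approach as the paper: both arguments truncate $P$ against $K\rho^{\otimes N}$ to control $(P_{n})_{(N-1)}$ from above and add a multiple of $\rho^{\otimes N}$ to control $P_{n}$ from below, then combine these to verify (\ref{27}). The only difference is organizational: the paper first reduces to the case $P\leq C\rho^{\otimes N}$ and then perturbs, whereas you fold both steps into a single sequence $P_{n}$, which is slightly more economical.
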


\begin{proof}
Since $dx$ is $\sigma $-finite, it is a simple exercise to show that there
is a probability density $\rho >0$ on $\Lambda .$ For $k\in \Bbb{N},$ set $%
Q_{k}:=\min \{P,k\rho ^{\otimes N}\}\geq 0,$ so that $Q_{k}\leq P$ and $%
Q_{k}\rightarrow P$ a.e. By dominated convergence, $Q_{k}\rightarrow P$ in $%
L^{1}(\Lambda ^{N};d^{N}x)$ and so $P_{k}:=||Q_{k}||_{1,d^{N}x}^{-1}Q_{k}%
\geq 0$ is a symmetric probability density and $P_{k}\rightarrow P$ in $%
L^{1}(\Lambda ^{N};d^{N}x).$ In addition, $P_{k}\leq
k||Q_{k}||_{1,d^{N}x}^{-1}\rho ^{\otimes N}.$ Thus, upon replacing $P$ with $%
P_{k},$ it suffices to complete the proof under the assumption $P\leq C\rho
^{\otimes N}$ for some constant $C>0.$

For $n\in \Bbb{N},$ define $P_{n}=n(n+1)^{-1}(P+n^{-1}\rho ^{\otimes N}).$
Then, $P_{n}>0$ is a symmetric probability density on $\Lambda ^{N}.$ Also, $%
P_{n}\rightarrow P$ a.e. and $P_{n}\leq P+\rho ^{\otimes N}.$ Thus, by
dominated convergence, $P_{n}\rightarrow P$ in $L^{1}(\Lambda ^{N};d^{N}x).$
We now verify (\ref{27}) for $P_{n}.$ First, $%
P_{n,(N-1)}=n(n+1)^{-1}P_{(N-1)}+(n+1)^{-1}\rho ^{\otimes (N-1)}$ and so the
assumption $P\leq C\rho ^{\otimes N}$ with $C\geq 1$ entails $%
P_{n,(N-1)}\leq (n+1)^{-1}(nC+1)\rho ^{\otimes (N-1)}.$ Next, $P_{n}\geq
(n+1)^{-1}\rho ^{\otimes N},$ so that $P_{n}$ satisfies (\ref{27}) with $%
B=\Lambda $ and $\gamma =(nC+1)^{-1}\rho .$
\end{proof}

If $\Lambda $ has finite $dx$ measure and attention is confined to
probability densities in $L^{\infty }(\Lambda ^{N};d^{N}x),$ a stronger
result is true.

\begin{theorem}
\label{th15}If $\Lambda $ has finite $dx$ measure, the set $\mathcal{P}_{*}$
of bounded symmetric probability densities $P$ on $\Lambda ^{N}$ such that $%
P>0$ and that $P$ satisfies (\ref{27}) contains an open and dense subset
(for the $L^{\infty }(\Lambda ^{N};d^{N}x)$ topology) of the set $\mathcal{P}
$ of all bounded symmetric probability densities on $\Lambda ^{N}.$
\end{theorem}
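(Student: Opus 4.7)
The plan is to exhibit an explicit subset $\mathcal{O}\subseteq \mathcal{P}$ that is open and dense in $\mathcal{P}$ for the $L^{\infty }$ topology and contained in $\mathcal{P}_{*}$; specifically, I take
\[
\mathcal{O}:=\{P\in \mathcal{P}:\operatorname{essinf}\nolimits_{\Lambda ^{N}}P>0\},
\]
the bounded symmetric probability densities that are bounded below away from $0$. Three things must be verified: (a) $\mathcal{O}\subseteq \mathcal{P}_{*}$, (b) $\mathcal{O}$ is $L^{\infty }$-open in $\mathcal{P}$, and (c) $\mathcal{O}$ is $L^{\infty }$-dense in $\mathcal{P}$.

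For (a), let $P\in \mathcal{O}$ with $P\geq \varepsilon >0$ a.e.\ and $M:=||P||_{\Lambda ^{N},\infty }$. Since $\Lambda $ has finite $dx$ measure, integrating the pointwise bound $P\leq M$ in one variable yields $P_{(N-1)}\leq M|\Lambda |$ a.e.\ on $\Lambda ^{N-1}$, so by Fubini one has
\[
P(\cdot ,x_{N})\geq \varepsilon \geq \frac{\varepsilon }{M|\Lambda |}P_{(N-1)}
\]
a.e.\ on $\Lambda ^{N-1}$ for a.e.\ $x_{N}\in \Lambda $; thus (\ref{27}) holds with $B=\Lambda $ and $\gamma \equiv \varepsilon /(M|\Lambda |)$. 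For (b), if $\operatorname{essinf}P\geq 2\varepsilon >0$ and $Q\in \mathcal{P}$ satisfies $||Q-P||_{\Lambda ^{N},\infty }<\varepsilon $, then $Q\geq P-\varepsilon \geq \varepsilon $ a.e., so $Q\in \mathcal{O}$.

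For (c), I would approximate an arbitrary $P\in \mathcal{P}$ by convex combinations with the uniform density. Let $\rho :=|\Lambda |^{-1}$, viewed as a probability density on $\Lambda $ (meaningful because $\Lambda $ has positive finite $dx$ measure), and set $P_{n}:=(1-n^{-1})P+n^{-1}\rho ^{\otimes N}$. Each $P_{n}$ is a bounded symmetric probability density on $\Lambda ^{N}$ with $P_{n}\geq n^{-1}|\Lambda |^{-N}>0$ a.e., hence $P_{n}\in \mathcal{O}$; and $||P_{n}-P||_{\Lambda ^{N},\infty }=n^{-1}||\rho ^{\otimes N}-P||_{\Lambda ^{N},\infty }\rightarrow 0$. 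There is no real obstacle here; the finiteness of $|\Lambda |$ is used only in (a), to bound $P_{(N-1)}$ by the constant $M|\Lambda |$, and in (c), to make sense of the uniform density $\rho $ on $\Lambda $.
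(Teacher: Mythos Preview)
Your proof is correct and essentially the same as the paper's: the open set $\mathcal{O}=\{P\in\mathcal{P}:\operatorname{essinf}P>0\}$ coincides with the paper's $\bigcup_{\ell}\mathcal{O}_{\ell}$ (since every $P\in\mathcal{P}$ is already bounded above), and your verification that $\mathcal{O}\subseteq\mathcal{P}_{*}$ and that $\mathcal{O}$ is open mirrors theirs. The only cosmetic difference is in the density step, where the paper approximates $P$ by normalized truncations $\|Q_n\|_1^{-1}\max\{P,n^{-1}\}$ while you use the convex combinations $(1-n^{-1})P+n^{-1}|\Lambda|^{-N}$; your construction is slightly cleaner since no renormalization is needed.
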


\begin{proof}
For brevity, we omit a few technical details. For $\ell \in \Bbb{N},$ call $%
\mathcal{O}_{\ell }$ the set of those $P\in \mathcal{P}$ such that $\limfunc{
ess}\sup P<\ell $ and $\limfunc{ess}\inf P>\ell ^{-1}.$ Then, $\mathcal{O}%
_{\ell }$ is open for the $L^{\infty }(\Lambda ^{N};d^{N}x)$ topology (empty
if $\ell =1$) and, if $P\in \mathcal{O}_{\ell },$ it is easily verified that 
$P>0$ satisfies (\ref{27}) with $B=\Lambda .$

If $P\in \mathcal{P}$ and $n\in \Bbb{N},$ set $Q_{n}:=\max \{P,n^{-1}\}\in
L^{\infty }(\Lambda ^{N};d^{N}x).$ Then $Q_{n}$ is symmetric, $Q_{n}>0$ and $%
Q_{n}\rightarrow P$ in $L^{\infty }(\Lambda ^{N};d^{N}x).$ Since $\Lambda $
has finite measure, $Q_{n}\rightarrow P$ in $L^{1}(\Lambda ^{N};d^{N}x)$ and
so $P_{n}:=||Q_{n}||_{1,d^{N}x}^{-1}Q_{n}\in \mathcal{P}$ and $%
P_{n}\rightarrow P$ in $L^{\infty }(\Lambda ^{N};d^{N}x).$ Furthermore, $%
P_{n}\in \mathcal{O}:=\cup _{\ell \in \Bbb{N}}\mathcal{O}_{\ell }$ (open) if 
$n>||P||_{\Lambda ^{N},\infty },$ for then $P_{n}\in \mathcal{O}_{\ell }$
with any $\ell >(n+1)\max
\{||Q_{n}||_{1,d^{N}x},||Q_{n}||_{1,d^{N}x}^{-1}\}. $
\end{proof}

In Theorem \ref{th15}, $\mathcal{P}$ is a Baire space (a closed subset of $%
L^{\infty }(\Lambda ^{N};d^{N}x)$) and $\mathcal{P}_{*}$ is residual in $%
\mathcal{P}$ in the sense of Baire category. Thus, by Baire category
estimates, the set $\mathcal{P}\backslash \mathcal{P}_{*}$ contains only
non-typical elements, but such non-typical elements may look deceptively
innocuous:

\begin{example}
\label{ex4}With $\Lambda =[0,1]$ and $dx$ the Lebesgue measure, let $N=2$
and let $P(x_{1},x_{2}):=3|x_{1}-x_{2}|.$ Then, $P>0$ a.e., $%
P_{(1)}(x_{1})=3(x_{1}^{2}-x_{1}+\frac{1}{2})$ and (\ref{27}) is equivalent
to $|x_{1}-x_{2}|\geq \gamma (x_{2})(x_{1}^{2}-x_{1}+\frac{1}{2})$ for a.e. $%
x_{1}\in [0,1],$ which does not hold for any $x_{2}\in [0,1]$ and any
constant $\gamma (x_{2})>0$ since the (continuous) left-hand side vanishes
when $x_{1}=x_{2}.$ Thus, (\ref{27}) fails.
\end{example}

If $Pd^{N}x$ is the probability that $N$ particles have the coordinates $%
x_{1},...,x_{N},$ respectively, then it is natural to require $%
P(x_{1},...,x_{N})=0$ when $x_{i}=x_{j}$ for some $i\neq j.$ A short
elaboration on Example \ref{ex4} shows that such densities $P$ do not
satisfy (\ref{27}).

\end{document}